\newtheorem{theorem}{Theorem}[section]
\newtheorem{lemma}[theorem]{Lemma}
\theoremstyle{definition}
\newtheorem{definition}[theorem]{Definition}
\newtheorem{example}[theorem]{Example}
\newtheorem{proposition}[theorem]{Proposition}
\newtheorem{remark}[theorem]{Remark}
\numberwithin{equation}{section}
\begin{document}
\title{On the \'{e}tale fundamental groups of arithmetic schemes}
\author{Feng-Wen An}
\address{School of Mathematics and Statistics, Wuhan University, Wuhan,
Hubei 430072, People's Republic of China}
\email{fwan@amss.ac.cn}
\subjclass[2010]{Primary 14F35; Secondary 11G35}
\keywords{arithmetic scheme, arithmetically unramified, \'{e}tale fundamental group,  formally
unramified}

\begin{abstract}
In this paper we will give a computation of the \'{e}tale fundamental group
of an integral arithmetic scheme. For such a scheme, we will prove that the
\'{e}tale fundamental group is naturally isomorphic to the Galois group of
the maximal formally unramified extension over the function field. It consists of
the main theorem of the paper. Here, formally unramified will be proved to
be arithmetically unramified which is defined in an evident manner and coincides with that in algebraic number theory.
Hence, formally unramified has an arithmetic sense.
At the same time, such a computation coincides
with the known result for a normal  noetherian scheme.
\end{abstract}

\maketitle

\begin{center}
{\tiny {Contents} }
\end{center}

{\tiny \qquad {Introduction} }

{\tiny \qquad {1. Statement of The Main Theorem} }

{\tiny \qquad {2. Preliminaries}}

{\tiny \qquad {3. A Criterion of Galois Covers}}

{\tiny \qquad {4. The Calculus and Construction of Galois Covers}}

{\tiny \qquad {5. Formally Unramified = Arithmetically Unramified}}

{\tiny \qquad {6. Proof of The Main Theorem} }

{\tiny \qquad {References}}{}

\section*{Introduction}

In this paper we will give a computation of the \'{e}tale fundamental group
of an arithmetic variety, i.e., an integral arithmetic scheme surjectively over $Spec(\mathbb{Z})$ of finite type.

Let $X$ be an arithmetic variety. It will be
proved that the \'{e}tale fundamental group $\pi ^{et}_{1}(X,s)$ is isomorphic to the Galois group
$Gal(k(X)^{un}/k(X))$ of the maximal formally
unramified extension $k(X)^{un}$ of the function field $k(X)$ in a natural manner. See the
main theorem of the paper.

Such a computation of the \'{e}tale fundamental group
coincides with the known result on a normal  noetherian
scheme (for instance, see \cite{f-k,sga1,mln,sz}).

In particular, we will prove that the maximal formally unramified extension $k(X)^{un}$
is equal to the maximal arithmetically
unramified extension $k(X)^{au}$.

Here, arithmetically
unramified, which is defined in an evident manner (see \S 5), coincides with that in algebraic number theory and has many arithmetic operations and properties.
Hence, the maximal formally unramified extension is exactly of an arithmetic
sense.

At the same time, the arithmetically unramified is more accessible to operations. Naturally,
the arithmetically unramified
can serve as a computation of the \'{e}tale fundamental group of
an arithmetic variety.

For instance, as an immediate application, take a finite number of variables $t_{1},t_{2},\cdots, t_{n}$
 over $\mathbb{Q}$. It is proved that $\mathbb{Q}(t_{1},t_{2},\cdots, t_{n})$ has a trivial maximal arithmetically
 unramified extension (see \cite{an6}). Therefore, the arithmetic variety $Spec(\mathbb{Z}[t_{1},t_{2},\cdots, t_{n}]$ has a trivial
 \'{e}tale fundamental group (see \emph{Example 1.8}), i.e.,
$$
\pi _{1}^{et}\left( Spec(\mathbb{Z}[t_{1},t_{2},\cdots, t_{n}])\right)\cong \{0\}.
$$

\section{Statement of the Main Theorem}

\subsection{Notations}

In this paper, an \textbf{arithmetic variety} is an integral scheme that is
surjectively over $Spec\left( \mathbb{Z}\right) $ of finite type.

For an integral scheme $Y$, let $k(Y)= \mathcal{O}_{Y,\xi}$ be the
function field of $Y$, where $\xi$ is the generic point of $Y$. Denote by
$
\pi _{1}^{et}\left( Y,s\right)
$ the \'{e}tale fundamental group of $Y$ over a geometric point $s$ of $Y$ over a given algebraic closure of the function field $k(Y)$.
Sometimes, we write $\pi _{1}^{et}\left( Y\right)=\pi _{1}^{et}\left( Y,s\right)$ for brevity.

Let $Gal(L/K)$ denote the Galois group
of an extension $L$ of a field $K$.

\subsection{Formally unramified}

Fixed an arithmetic variety $X$. Let $\Omega$ be  an algebraic closure  of
the function field ${k(X)}$ and $s_{\xi}$ the geometric point  of $X$ over $\Omega$ such that $s_{\xi}(Spec(\Omega))$ is the generic point $\xi$ of $X$.

Put
\begin{itemize}
\item $\left[X; \Omega \right]_{et}\triangleq$
the set of the equivalence classes of  pointed covers $(Y,s_{Y})$ of $(X,s_{\xi})$ such that $Y$ is an irreducible finite
\'{e}tale Galois cover of $X$ and  $s_{Y} $ is a geometric point of $Y$ over $s_{\xi}$. Here,
two such covers $(Y,s_{Y}),(Z,s_{Z}) $ of $(X,s_{\xi})$ are
equivalent if there is an isomorphism $f$ of $Y$ onto $Z$
over $X$ with $f(s_{Y})=s_{Z}$.
\end{itemize}

For finite \'{e}tale Galois covers, see \S 6.1 below in the paper or see \cite{f-k,sga1,mln,sz}.

For brevity, denote by $Y$ or $(Y,s_{Y})$ for an equivalence class of the pointed cover $(Y,s_{Y})$ that
is contained in $ \left[
X;\Omega \right] _{et}$.

For any $X_{\alpha},X_{\beta}\in \left[X; \Omega \right]_{et}$, we say $$
X_{\alpha}\leq X_{\beta}$$ if and only if $X_{\beta}$ is a finite \'{e}tale
Galois cover over $X_{\alpha}.$ Then $\left[X; \Omega \right]_{et} $ is a
directed set with the partial order $\leq$.

Set
\begin{itemize}
\item $
k(X; \Omega)^{un}\triangleq {{\lim}_{\rightarrow_{Z\in {\left[X; \Omega \right]_{et}}}}}{\lambda_{Z}(k(Z))}
$, i.e., $k(X; \Omega)^{un}$ is the direct limit of the direct system of fields $k(Z)$
indexed by $\left[X; \Omega \right]_{et}$, where each $\lambda_{Z}:k(Z)\to
\Omega$ is the $k(X)$-embedding of fields.
\end{itemize}

The subfield $k(X; \Omega)^{un}$ of $\Omega$ is said to be the \textbf{maximal
formally unramified extension} of the arithmetic variety $X$ (or of the function field $k(X)$) in $\Omega$.

It is seen that $k(X; \Omega)^{un}$ is well-defined since all the $k(X)$-embeddings $$
\lambda_{Z}:k(Z)\to \Omega$$ are compatible in a natural manner. Moreover, $k(X; \Omega)^{un}$
is an algebraic Galois extension of the field $k(X)$.

In deed, it will also be seen that the maximal formally unramified extension
is exactly of an arithmetic sense that coincide with algebraic number theory (see \S\S 1.4, 5.2 for details;
see \cite{an6,an7} for further results).

\subsection{Statement of the main theorem}

Now we give a computation of the \'{e}tale fundamental group of an
arithmetic variety, which is the main theorem of the paper.

\begin{theorem}
For an arithmetic variety $X$, the \'{e}tale fundamental group $\pi _{1}^{et}\left( X,s\right)$ is naturally
isomorphic to the Galois group of the maximal arithmetically unramified extension $k(X; \Omega)^{un}$
over the function field $k(X)$.

That is, there is an isomorphism
\begin{equation*}
\pi _{1}^{et}\left( X,s\right) \cong Gal\left( k(X; \Omega)^{un}/k\left( X\right)
\right)
\end{equation*}
between groups in a natural manner.

Here, $s$ is a geometric point of $X$ over an algebraic closure $\Omega$ of the function field $k(X)$.
\end{theorem}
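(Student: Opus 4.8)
The plan is to reduce the statement to the classical Galois-theoretic description of the étale fundamental group as an inverse limit over finite étale Galois covers, and then to translate that inverse limit into the Galois group of the corresponding compositum of function fields inside $\Omega$. First I would recall the standard fact (as in \cite{f-k,sga1,mln,sz}) that for a connected locally noetherian scheme $X$ with geometric point $s$ one has a canonical isomorphism
\begin{equation*}
\pi_{1}^{et}(X,s)\cong \varprojlim_{Y}\operatorname{Aut}(Y/X),
\end{equation*}
where $Y$ ranges over the (cofiltered) system of finite étale Galois covers of $X$ pointed over $s$; the transition maps are the restriction maps $\operatorname{Aut}(Y/X)\to\operatorname{Aut}(Y'/X)$ whenever $Y$ dominates $Y'$. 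Since an arithmetic variety is integral and of finite type over $\operatorname{Spec}(\mathbb{Z})$, hence noetherian and connected, this applies, and the indexing system is precisely (an essentially small skeleton of) the directed set $\left[X;\Omega\right]_{et}$ introduced above.

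Next I would set up the field-theoretic side. For each $Z\in\left[X;\Omega\right]_{et}$, since $Z$ is an irreducible finite étale Galois cover of $X$ and $X$ is integral, $Z$ is integral and $k(Z)/k(X)$ is a finite Galois extension with $\operatorname{Gal}(k(Z)/k(X))\cong\operatorname{Aut}(Z/X)$ via the action on the generic fibre; the embedding $\lambda_{Z}:k(Z)\hookrightarrow\Omega$ is the one determined by the chosen geometric point $s_{Z}$ over $s_{\xi}$. The compatibility of the $\lambda_{Z}$ (asserted in the excerpt) shows $k(X;\Omega)^{un}=\bigcup_{Z}\lambda_{Z}(k(Z))$ is a directed union of finite Galois subextensions of $\Omega/k(X)$, hence is itself Galois over $k(X)$, and
\begin{equation*}
\operatorname{Gal}\bigl(k(X;\Omega)^{un}/k(X)\bigr)\cong\varprojlim_{Z}\operatorname{Gal}\bigl(k(Z)/k(X)\bigr).
\end{equation*}
Combining the two displays with the isomorphisms $\operatorname{Aut}(Z/X)\cong\operatorname{Gal}(k(Z)/k(X))$, and checking that these are compatible with the transition maps on both sides (restriction of cover-automorphisms corresponds to restriction of field-automorphisms), yields the desired natural isomorphism $\pi_{1}^{et}(X,s)\cong\operatorname{Gal}(k(X;\Omega)^{un}/k(X))$. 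Naturality in $s$ (equivalently, independence up to inner automorphism) follows from the usual transport-of-structure argument once a path between two geometric points is chosen.

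The main obstacle, and the point that genuinely uses the hypothesis that $X$ is an \emph{arithmetic} variety rather than an arbitrary normal noetherian scheme, is the passage between finite étale covers of $X$ and finite extensions of $k(X)$: one must show that the generic-fibre functor $Z\mapsto k(Z)$ sets up a cofinal, order-preserving correspondence between $\left[X;\Omega\right]_{et}$ and the finite Galois subextensions of $k(X;\Omega)^{un}/k(X)$. In particular it must be checked that every $Z\in\left[X;\Omega\right]_{et}$ is irreducible with function field a \emph{Galois} extension of $k(X)$ (so the stalk at the generic point really is a field and the automorphism groups match up in the stated way), and that no finite subextension of $k(X;\Omega)^{un}$ is omitted — i.e. the direct limit defining $k(X;\Omega)^{un}$ indexes exactly the function fields arising from such covers. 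I expect this to be handled by the criterion for Galois covers of \S 3 together with the calculus of \S 4, and the subsequent identification of $k(X;\Omega)^{un}$ with the arithmetically unramified extension of \S 5 is what gives the result its arithmetic content; the remaining bookkeeping (exactness of inverse limits of finite groups, compatibility of transition maps, independence of $\Omega$ up to isomorphism) is routine.
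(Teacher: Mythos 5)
Your overall reduction is sound, but it follows a genuinely different route from the paper's \S 6. You stay entirely on the formal side: you invoke $\pi_{1}^{et}(X,s)\cong\varprojlim_{Z}\operatorname{Aut}(Z/X)$ over pointed finite \'{e}tale Galois covers, identify $\operatorname{Aut}(Z/X)\cong Gal(k(Z)/k(X))$ for each irreducible $Z$, and then use infinite Galois theory to identify $\varprojlim_{Z}Gal(k(Z)/k(X))$ with $Gal$ of the directed union $\bigcup_{Z}\lambda_{Z}(k(Z))$, which is $k(X;\Omega)^{un}$ by its very definition in \S 1.2. The paper instead builds, for a chosen set of generators $\Delta$ of $k(X;\Omega)^{un}$ over $k(X)$, the quasi-galois covers $X[\alpha]$ (\emph{Theorem 4.1}) indexed by finite $\alpha\subseteq\Delta$, forms the auxiliary directed set $\left[X;\Omega\right]_{qc}$, and proves that $\left[X;\Omega\right]_{et}$ is cofinal in it, using \emph{Lemma 5.9} and the operations of \S 5.2, before passing to inverse limits. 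Your argument buys economy: it needs none of \S\S 3--4 and none of the arithmetic machinery, since for the bare isomorphism the cofinality you worry about (that every finite Galois subextension of $k(X;\Omega)^{un}$ arises from a cover) is not actually required --- for any order-preserving family of finite Galois subextensions indexed by a directed set, $Gal$ of the union is the inverse limit of the $Gal$'s of the members, whether or not the family exhausts all finite subextensions. What the paper's heavier route buys is the geometric realization $X_{et}=X[\Delta_{un}]$ of the universal cover and the bridge to the arithmetically unramified extension, which in your framework would still have to be supplied separately by \emph{Lemma 5.9}.

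The one point you gloss over is, however, exactly the non-formal one. Elements of $\left[X;\Omega\right]_{et}$ are by definition irreducible, whereas the classical inverse-limit description of $\pi_{1}^{et}$ runs over \emph{connected} finite \'{e}tale Galois covers; your claim that the indexing system ``is precisely'' $\left[X;\Omega\right]_{et}$, and your later remark that one must check that every $Z\in\left[X;\Omega\right]_{et}$ is irreducible (which is vacuous by definition), both miss that what actually has to be shown is that every connected finite \'{e}tale Galois cover of $X$ is irreducible, or at least that the irreducible ones are cofinal. This is the content of the paper's \emph{Lemma 6.5}, and it is delicate precisely because an arithmetic variety need not be normal: for a non-normal integral base a connected finite \'{e}tale cover can a priori be reducible (the familiar cycle-of-components covers of nodal curves), so this step cannot be waved through as routine and deserves an explicit argument in your write-up. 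Apart from that, your identification $\operatorname{Aut}(Z/X)\cong Gal(k(Z)/k(X))$ (via faithfulness on the generic fibre and the count $\#\operatorname{Aut}(Z/X)=[k(Z):k(X)]$) and the limit bookkeeping are correct.
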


\begin{remark}
Such a computation above coincides with the known results on \'{e}tale
fundamental groups of normal connected schemes (e.g., see \cite{an4,f-k,sga1,mln,sz}).
\end{remark}

We will prove \emph{Theorem 1.1} in \S 6 after we make preparations in \S \S
2-5.

\subsection{Concluding Remarks}

There are the following remarks as a conclusion of the section.

\begin{remark}
The \'{e}tale fundamental groups of arithmetic varieties are a birational invariant.
In deed, let $X$ and $Y$ be two arithmetic varieties such that the function fields $k(X)\cong k(Y)$ are isomorphic. Then there is a natural isomorphism
$$\pi _{1}^{et}\left( X\right)\cong \pi _{1}^{et}\left( Y\right)$$ between the \'{e}tale fundamental groups.
\end{remark}

\begin{remark}
Fixed an arithmetic variety $X$ and an algebraic closure $\Omega$ of $k(X)$.
Let $k(X,\Omega)^{au}$ be the
the maximal arithmetically unramified extension of $k(X)$
in $\Omega$ (see \S 5 for detail). It is seen that $$k(X; \Omega)^{au}=k(X; \Omega)^{un}$$ holds,
i.e., for the field $k(X)$, the maximal formally unramified extension is equal to the
maximal arithmetically unramified extension that is given in an arithmetic manner  (see \emph{Lemma 5.9}). Hence, the formally unramified extension
$k(X; \Omega)^{un}$ has a sense of arithmetics.
\end{remark}

\begin{remark}
In \cite{an2b} we also give a computation of the \'{e}tale fundamental group of an algebraic scheme by virtue of
algebraic unramified extensions over the function field. In deed, let $Z$ be an integral algebraic scheme $Z$ over a field $K$ of finite type.
It is seen that the \'{e}tale fundamental group $\pi_{1}^{et}(Z,s)$
is naturally isomorphic to the Galois group of
the maximal algebraically unramified extension over the function field $k(Z)$. However, the algebraically unramified and the arithmetically unramified are very different
from each other. In particular, the algebraically unramified is different from that in algebraic number theory.
\end{remark}

\begin{remark}
Let $X$ be an arithmetic variety and
let $\mathcal{O}_{k(X)}$ be the ring of algebraic integers in the field $k(X)$ (see \S 5 for detail).
There is an isomorphism
$$\pi _{1}^{et}\left( X\right)\cong \pi _{1}^{et}\left(Spec(\mathcal{O}_{k(X)})\right)$$
in a natural manner. Hence, to compute $\pi _{1}^{et}\left( X\right)$, canonically it reduces  to find the \'{e}tale
fundamental group of the affine scheme $Spec(\mathcal{O}_{k(X)})$.
\end{remark}

\begin{remark}
Let $X$ be an arithmetic variety. Put $L=k(X)$. Then there are isomorphisms $$\pi _{1}^{et}\left( X\right)\cong
Gal(L^{au}/L)\cong Gal(L^{un}/L)$$ of groups in canonical
manner from \emph{Theorem 1.1, Remarks 1.4, 5.10}.
\end{remark}

At the end of the section, as an application of the main theorem in the paper, we give a computation of the \'{e}tale fundamental group of the ring of algebraic integers in a purely
transcendental extension over $\mathbb{Q}$.

\begin{example}
Let $t_{1},t_{2},\cdots, t_{n}$ be variables over $\mathbb{Q}$.
Then
$$
\pi _{1}^{et}\left( Spec(\mathbb{Z}[t_{1},t_{2},\cdots, t_{n}])\right)\cong \{0\}
$$
since
$$\mathbb{Q}(t_{1},t_{2},\cdots, t_{n})^{au}=\mathbb{Q}(t_{1},t_{2},\cdots, t_{n})$$ holds (see \cite{an6}).
\end{example}

\section{Preliminaries}

In the following we will fix notations and terminology which will be used
to prove the main theorem of the paper.

\subsection{Notations}

In the remainder of the paper, by an \textbf{integral variety} we will understand a finite-dimensional integral
scheme that is surjectively over $Spec(\mathbb{Z})$.

Define
$L^{al}$ (or $\overline{L}$) $\triangleq $ an algebraical closure of a field $L $;
$Fr(D)\triangleq $ the field of fractions of an integral domain $D$.

Let $f:X\rightarrow Y$ be a morphism of integral schemes. Denote by $f^{\sharp}:k(Y)\to k(X)$ the homomorphism of the function fields given by $f$; put
$Aut\left( X/Y\right) \triangleq $ the group of automorphisms of $X$
over $Y$.

Let $A$ be a commutative ring. Set
\begin{itemize}
\item $j_{x}(A)$ (or $j_{x}$) $\triangleq $ the prime ideal of $A$
corresponding to a point $x\in Spec(A)$.
\end{itemize}

\subsection{Essentially affine schemes}

Let $X$ be a scheme. An \textbf{affine covering} of $X$ is a family
\begin{equation*}
\mathcal{C}_{X}=\{(U_{\alpha },\phi _{\alpha };A_{\alpha })\}_{\alpha \in
\Delta }
\end{equation*}
where for each $\alpha \in \Delta $, $\phi _{\alpha }$ is an isomorphism
from an open set $U_{\alpha }$ of $X$ onto the spectrum $Spec{\ A_{\alpha }}$
of a commutative ring $A_{\alpha }$. Each $(U_{\alpha },\phi _{\alpha
};A_{\alpha })\in \mathcal{C}_{X}$ is called a \textbf{local chart}.
For  brevity, a local chart $(U_{\alpha},\phi_{\alpha};A_{\alpha
})$ will sometimes be denoted by $U_{\alpha}$ or $(U_{\alpha},\phi_{\alpha})$.

An affine covering $\mathcal{C}_{X}$ of $(X, \mathcal{O}_{X})$ is said to be
\textbf{reduced} if $U_{\alpha}\neq U_{\beta} $ holds for any $\alpha\neq
\beta$ in $\Delta$.

Let $\mathfrak{Comm}$ be the category of commutative rings with identity.
For a given field $\Omega$, let $\mathfrak{Comm}(\Omega)$ be the category
consisting of the subrings of $\Omega$ and their isomorphisms.

\begin{definition}
Let $\mathfrak{Comm}_{0}$ be a subcategory of $\mathfrak{Comm}$. An affine
covering of $X$, namely $\{(U_{\alpha},\phi_{\alpha};A_{\alpha })\}_{\alpha
\in \Delta}$, is said to be \textbf{with values} in $\mathfrak{Comm}_{0}$ if
for each $\alpha \in \Delta$, there are the properties:
$$
\mathcal{O}_{X}(U_{\alpha})=A_{\alpha};$$
$$ U_{\alpha}=Spec(A_{\alpha}),
$$
where each $A_{\alpha }$ is a ring contained in $\mathfrak{Comm}_{0}$.

In particular, an affine covering $\mathcal{C}_{X}$ of $X$ with values in $%
\mathfrak{Comm}(\Omega)$ is said to be \textbf{with values in the field $%
\Omega$}.
\end{definition}

\begin{definition}
An integral scheme $X$ is said to be \textbf{essentially affine} in a field $\Omega $ if for every affine open set $U$ of $X$
there are the properties: $$\mathcal{O}_{X}(U)\subseteq \Omega ;$$ $$U=Spec(\mathcal{O}
_{X}(U)).$$
\end{definition}

Here we give such a fact that every integral variety $X$ is isomorphic to an
integral variety $Z$ that is essentially affine in the field $k(X)$.

\begin{lemma}
For any integral variety $X$, there is an integral variety $Z$ satisfying the
properties:
\begin{itemize}
\item $k\left( X\right) =k\left( Z\right);$

\item $X\cong Z$ are isomorphic schemes;

\item $Z$ is essentially affine in the field $k(Z)$.
\end{itemize}
\end{lemma}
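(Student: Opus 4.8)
The plan is to construct $Z$ by gluing together affine schemes of the form $\operatorname{Spec}(A_\alpha)$ where each $A_\alpha$ is taken to be a subring of $k(X)$, so that the essential-affineness condition holds \emph{by fiat} on the charts of a distinguished covering, and then to upgrade this to \emph{all} affine open sets. First I would choose a finite affine covering $\{(U_\alpha,\phi_\alpha;B_\alpha)\}_{\alpha\in\Delta}$ of $X$; such a finite covering exists because an integral variety is quasi-compact (being of finite type over the noetherian scheme $\operatorname{Spec}(\mathbb{Z})$). Since $X$ is integral with generic point $\xi$, each $B_\alpha$ injects into the function field $k(X)=\mathcal{O}_{X,\xi}$; let $A_\alpha\subseteq k(X)$ denote the image, so $\operatorname{Spec}(A_\alpha)\cong U_\alpha$ canonically. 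The overlaps $U_\alpha\cap U_\beta$ carry induced scheme structures, and since $X$ is integral these are again integral schemes whose rings of sections embed in $k(X)$; the transition isomorphisms $\phi_\beta\circ\phi_\alpha^{-1}$ restricted to these overlaps become, after identifying everything inside $k(X)$, simply the identity on the relevant subrings of $k(X)$. Hence one can reglue the family $\{\operatorname{Spec}(A_\alpha)\}$ along these identifications to obtain a scheme $Z$ together with an isomorphism $X\cong Z$, and $Z$ is integral with $k(Z)=k(X)$.

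The substantive point — and the step I expect to be the main obstacle — is passing from ``essentially affine on the charts of one chosen covering'' to ``essentially affine'' in the sense of \emph{Definition 2.3}, i.e.\ the condition $U=\operatorname{Spec}(\mathcal{O}_Z(U))$ for \emph{every} affine open $U\subseteq Z$. Given an arbitrary affine open $U$, one knows $U\cong\operatorname{Spec}(C)$ for $C=\mathcal{O}_Z(U)$, and since $Z$ is integral $C$ is a domain contained in $k(Z)$; what must be checked is that the scheme structure on $U$ inherited from $Z$ agrees with the standard spectrum structure on $\operatorname{Spec}(C)$ viewed as a subring of $k(Z)$. The key observation is that on an integral scheme, an open immersion $U\hookrightarrow Z$ induces an isomorphism on stalks at every point of $U$ and on the ring of global sections, and the topology and structure sheaf of an affine scheme are determined by the ring together with the identification of its localizations with the stalks; since all rings in sight are subrings of the single field $k(Z)$ with compatible localizations, the identity of scheme structures follows. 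I would organize this as: (i) reduce to $U$ contained in one of the charts $\operatorname{Spec}(A_\alpha)$ by covering $U$ with principal opens and using that being essentially affine is local; (ii) for $U\subseteq\operatorname{Spec}(A_\alpha)$ a principal open $D(f)$ with $f\in A_\alpha$, note $\mathcal{O}_Z(U)=(A_\alpha)_f\subseteq k(Z)$ and $D(f)\cong\operatorname{Spec}((A_\alpha)_f)$ canonically; (iii) for general affine $U$, write $U=\bigcup_i D(f_i)$ and patch.

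A cleaner alternative, which I would mention as a remark, is to take $Z$ to be the scheme whose underlying space is $X$ and whose structure sheaf is the subsheaf of the constant sheaf $k(X)$ given by $U\mapsto\bigcap_{x\in U}\mathcal{O}_{X,x}\subseteq k(X)$; this is manifestly a sheaf of subrings of $k(X)$, it recovers $\mathcal{O}_X$ because $X$ is integral (so $\mathcal{O}_X(U)=\bigcap_{x\in U}\mathcal{O}_{X,x}$ inside $k(X)$ for every open $U$), and one then checks directly that for affine open $U$ the natural map $U\to\operatorname{Spec}(\mathcal{O}_Z(U))$ is an isomorphism. Either way the three bulleted conclusions — $k(X)=k(Z)$, $X\cong Z$, and $Z$ essentially affine in $k(Z)$ — drop out, with the verification of \emph{Definition 2.3} for \emph{arbitrary} affine opens being the only part requiring genuine care rather than bookkeeping.
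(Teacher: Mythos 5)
Your core idea --- replace each coordinate ring by its canonical image inside $k(X)$ and reglue --- is the same trick the paper uses, but there are two points of divergence, one minor and one substantive. The minor one: your opening appeal to quasi-compactness is not available here, because in this paper an \emph{integral variety} (\S 2.1) is only a finite-dimensional integral scheme surjective over $Spec(\mathbb{Z})$, with no finite-type hypothesis (that is reserved for \emph{arithmetic} varieties), and Lemma 2.3 is later invoked for covers that are explicitly not of finite type; so a finite affine covering need not exist. This is harmless in itself, since nothing in your construction needs finiteness, but it points to the real difference: the paper does not glue over a chosen covering at all. It takes \emph{every} affine open $U$ of $X$, forms $B_U=i_X(\mathcal{O}_X(U))\subseteq k(X)$, and glues the schemes $Spec(B_U)$ along the maps $\psi_U=\tau_U\circ\phi_U$, so that each affine open of the resulting $Z$ is, by construction, one of the glued pieces.

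The substantive gap is exactly the step you flagged as the main obstacle, your (i)--(iii). Definition 2.2 only has content if read strictly: any affine open of any scheme is already \emph{canonically isomorphic} to the spectrum of its ring of sections, so the requirement ``$\mathcal{O}_Z(U)\subseteq k(Z)$ and $U=Spec(\mathcal{O}_Z(U))$'' must mean that the sections literally form a subring of $k(Z)$ and that $U$ literally is its prime spectrum; otherwise the second condition is vacuous and the lemma says nothing. Your step (ii) delivers only the canonical isomorphism $D(f)\cong Spec((A_\alpha)_f)$: as a set, $D(f)$ consists of primes of $A_\alpha$, not of $(A_\alpha)_f$, and no argument about compatible localizations inside $k(Z)$ converts that bijection into an equality of spaces. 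The same objection applies to your ``cleaner alternative'': changing the structure sheaf to $U\mapsto\bigcap_{x\in U}\mathcal{O}_{X,x}$ leaves the underlying space of $X$ untouched, so again $U$ is isomorphic to, but not equal to, $Spec(\mathcal{O}_Z(U))$. The missing idea is precisely the paper's indexing choice: perform the replacement for \emph{all} affine opens simultaneously and glue, so that the essential-affineness condition holds for every affine open by construction and no propagation argument from a covering is needed. (If one insists on reading Definition 2.2 only up to canonical isomorphism, your argument goes through trivially, but then so does the lemma itself, and the second bullet of the definition would be redundant.)
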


Such an integral variety $Z$ is called an \textbf{essentially affine realization} of $X$.

\begin{proof}
Fixed an affine open set $U$ of $Y$. Let
\begin{equation*}
\phi_{U}: (U, \mathcal{O}_{X}|_{U})\to (Spec(A_{U}), \mathcal{O}
_{Spec(A_{U})})
\end{equation*}
be an isomorphism of schemes. We have ring isomorphisms
\begin{equation*}
\phi_{U} ^{\sharp}:A_{U}\to \mathcal{O}_{X}(U);
\end{equation*}
\begin{equation*}
i_{X}:\mathcal{O}_{X}(U)\to B_{U}\subseteq k(X).
\end{equation*}
The composite of ring isomorphisms
\begin{equation*}
t_{U}= i_{X}\circ \phi_{U} ^{\sharp}:A_{U}\to B_{U}
\end{equation*}
induces an isomorphism
\begin{equation*}
\tau_{U}:(Spec{( A_{U})},\mathcal{O}_{Spec{( A_{U}})})\to (Spec{( B_{U})},
\mathcal{O}_{Spec( B_{U})})
\end{equation*}
between affine schemes.
Put
\begin{equation*}
\psi_{U}=\tau _{U}\circ \phi_{U}.
\end{equation*}

Then there is a new scheme $Z$ obtained by gluing all such affine schemes
\begin{equation*}
(Spec( B_{U}),\mathcal{O}_{Spec( B_{U})})
\end{equation*}
along $\psi_{U}$ for every affine open set $U$ of $X$ (see \cite{ega,hartshorne,iitaka}).
It is seen that the scheme $Z$ has the desired properties.
\end{proof}

\subsection{Affine points in a field}

 The affine points in a field will be applied  to a universal construction of a Galois cover of an
integral variety in \S 4.
Let $\Psi$ be an arbitrary field.

\begin{definition}
By
an \textbf{affine $\Psi$-point} (or \textbf{affine point} in $\Psi$) we understand a point contained in the underlying space
of an affine scheme $Spec(A)$ for a subring $A$ of the field $ \Psi$ such that $Fr(A)=\Psi$.

Denote by $[\Psi]_{ap}$ the set of all affine points in the field $\Psi$.
\end{definition}

In general, it is not true that there is an affine scheme that has $[\Psi]_{ap}$ as the underlying space.

\begin{definition}
Let $x,y\in [\Psi]_{ap}$ be two affine points.

(i) $x$ and $y$ are  \textbf{$\Psi$-equal} if there
is a subring $A$ of the field $ \Psi$ such that
\begin{itemize}
\item $Fr(A)=\Psi$;

\item $x,y \in Spec(A)$;

\item $j_{x}(A)=j_{y}(A)$ holds as prime ideals in $A$.
\end{itemize}

(ii) $x$ and $y$ are \textbf{$\Psi$-equivalent}
if there are a tower of subrings $A\subseteq B\subseteq \Psi$ satisfying the properties:
\begin{itemize}
\item $Fr(A)=Fr(B)=\Psi$;

\item $x\in Spec(A), \, y\in Spec(B)$;

\item $j_{x}(A)=\rho^{-1}_{A,B}(j_{y}(B))$ holds as prime ideals in $A$,
\end{itemize}
where $\rho_{A,B}:A \hookrightarrow B$ is the inclusion map.
\end{definition}

It is clear that affine $\Psi $-points that are $\Psi$-equal are $\Psi $-equivalent.

\subsection{Function fields and invariant subrings}

Let $X$ be an integral variety with generic point $\xi $. Recall that for the function field $k(X)$,
 there is a natural inclusion
\begin{equation*}
i_{X}:\mathcal{O}_{X}(U)\rightarrow k(X)\triangleq \mathcal{O}_{X,\xi }
\end{equation*}
of rings for each open set $U$ in $X$, i.e.,
\begin{equation*}
k(X)=\{i_{X}(x)\in k(X):x\in \mathcal{O}_{X}(U),U\text{ is open in }X\}.
\end{equation*}

Furthermore, $k(X)$ is isomorphic onto the field
\begin{equation*}
Rat(X)\triangleq\{[U,f]:f\in \mathcal{O}_{X}(U),U\text{ is open in }X\},
\end{equation*}
where $[U,f]$ denotes the germ of $(U,f)$ (see \cite{ega,hartshorne,iitaka}).

For brevity, we will always identify $k(X)$, $(U,f)$, and $i_{X}(x)\in k(X)$
with $Rat(X)$, $[U,f]$, and $x\in \mathcal{O}_{X}(U)$, respectively.

Each automorphism $\sigma =(\sigma,\sigma ^{\sharp})$ of $X$ gives an automorphism
\begin{equation*}
\sigma ^{\sharp}\triangleq\sigma ^{\sharp}_{\xi}:\mathcal{O}_{X,\xi} \to \mathcal{O}_{X,\xi}
\end{equation*}
of the function field $k(X)$.

For an open set $U$ in $X$ and a subgroup $G\subseteq Aut(X/Y)$, put
\begin{itemize}
\item ${{\mathcal{O}_{X}}^{G}}(U)$ $\triangleq $ the subring of all
$x\in \mathcal{O}_{X}(U)$ such that $\sigma _{\xi }^{\sharp
}(i_{X}(x))=i_{X}(x)$ holds for every $\sigma \in
G $.
\end{itemize}

\subsection{Galois covers}

Let $f:X\rightarrow Y$ be a surjective morphism of integral schemes. Note that here $f$ is not necessarily of finite type.

\begin{definition}
$X$ is \textbf{Galois} over $Y$ by $f$ if the two conditions are satisfied:

\begin{itemize}
\item $k(X)$ is naturally  Galois  over $k(Y)$ by $f$;

\item $Aut(X/Y)\cong Gal(k(X)/k(Y))$ are isomorphic in a natural manner.
\end{itemize}
\end{definition}

Let $D\subseteq D_{1}\cap D_{2}$ be three integral domains contained in a
field $\Omega$. The ring $D_{1}$ is said to be a \textbf{conjugation} of $D_{2}$ over
$D$ if there is a $k$-isomorphism $$\tau:Fr(D_{1})\rightarrow Fr(D_{2})$$ between
fields such that $\tau(D_{1})=D_{2},$ where $k\triangleq Fr(D)$.

\begin{definition}
$X$ is \textbf{locally quasi-galois} over $Y$ by $f$ if the below condition
is satisfied:
\begin{quotation}
For any affine open subsets $V\subseteq Y$ and $U\subseteq f^{-1}(V)$, the
ring $\mathcal{O}_{X}(U)$ is contained in a common algebraic closure $\Omega$
 of $k(X)$ and there is one and only one conjugation of $\mathcal{%
\ O}_{X}(U)$ over the ring $f^{\sharp}(i_{Y}(\mathcal{O}_{Y}(V)))$.
\end{quotation}
\end{definition}

\begin{definition}
A reduced affine covering $\mathcal{C}_{X}$ of $X$ with values in an algebraic closure $\Omega$ of $k(X)$
is said to be \textbf{quasi-galois closed} over $Y$ by $f$ if there is a
local chart $(U_{\alpha }^{\prime },\phi _{\alpha }^{\prime };A_{\alpha
}^{\prime })\in \mathcal{C}_{X}$ such that $U_{\alpha }^{\prime }\subseteq
\varphi^{-1}(V_{\alpha})$ and $H_{\alpha } = A_{\alpha }^{\prime }$ hold for

\begin{itemize}
\item any $(U_{\alpha },\phi _{\alpha };A_{\alpha })\in \mathcal{C}_{X}$;

\item any affine open set $V_{\alpha}$ in $Y$ with $U_{\alpha }\subseteq
f^{-1}(V_{\alpha})$;

\item any conjugation $H_{\alpha }$ of $A_{\alpha }$ over $%
B_{\alpha}\triangleq f^{\sharp}(i_{Y}(\mathcal{O}_{ Y}(V_{\alpha})))$.
\end{itemize}
\end{definition}

\begin{definition}
$X$ is \textbf{quasi-galois closed} (or \textbf{\emph{qc}} for short) over $%
Y $ by $f$ if there is a reduced affine covering $\mathcal{C}_{X}$ of $X$
with values in an algebraic closure $\Omega$ of $k(X)$ such that $\mathcal{C}_{X}$ is quasi-galois
closed over $Y$ by $f$.
\end{definition}

\begin{definition}
$X$ is \textbf{locally complete} over $Y$ by $f$ if for every automorphism $\rho\in Gal(k(X)/f^{\sharp}(k(Y)))$ and every affine open sets $V\subseteq Y$ and $U\subseteq f^{-1}(V)$, there is an affine open set $W\subseteq f^{-1}(V)$ and an isomorphism $\widetilde{\rho}:\mathcal{O}_{X}(U)\to \mathcal{O}_{X}(W)$ between algebras over $f^{\sharp}(\mathcal{O}_{Y}(V))$ such that $\rho$ is induced from $\widetilde{\rho}$ in a natural manner.
\end{definition}

Immediately, by definitions we have
$$\emph{locally quasi-galois} \implies \emph{locally complete};$$
$$\emph{quasi-galois closed} \implies \emph{locally complete}.$$

On the other hand, from \emph{Step 4} in \S 3.3 it is seen that the above condition local completeness is automatically satisfied for integral affine schemes and for integral normal schemes.

\section{A Criterion of Galois Covers}

In this section we will give a criterion  of Galois covers of integral varieties,
which is one of the key points for us to compute the \'{e}tale
fundamental group of an arithmetic variety in the paper. In \S 4 by such a result
we will prove that there is a universal Galois cover for the \'{e}tale
fundamental group.

\subsection{A criterion of Galois covers}

The following theorem gives a criterion of Galois covers for integral varieties, which will be frequently used in the paper.

\begin{theorem}
Let $X$ and $Y$ be integral varieties. Suppose that $\phi:X\rightarrow Y$ is a surjective morphism of schemes such that $X$ is locally complete over $Y$ and $k\left( X\right) $ is
canonically an algebraic Galois extension over $k\left( Y\right) $ by $\phi$.
Then there is a natural
isomorphism
\begin{equation*}
Aut\left( X/Y\right) \cong Gal\left( k\left( X\right) /k\left( Y\right)
\right)
\end{equation*}
between groups.
In particular, $X$ is a Galois cover of $Y$.
\end{theorem}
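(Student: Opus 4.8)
The plan is to construct a group homomorphism $\Phi: Aut(X/Y) \to Gal(k(X)/k(Y))$, show it is injective, and then use the hypotheses — especially local completeness together with a counting argument — to conclude that it is surjective, whence an isomorphism. The existence of $\Phi$ is essentially formal: every automorphism $\sigma = (\sigma, \sigma^\sharp)$ of $X$ over $Y$ restricts to an automorphism $\sigma^\sharp_\xi$ of the stalk $\mathcal{O}_{X,\xi} = k(X)$ at the generic point $\xi$, and since $\sigma$ lies over $Y$, this automorphism fixes $\phi^\sharp(k(Y))$ pointwise. Thus $\sigma \mapsto \sigma^\sharp_\xi$ lands in $Gal(k(X)/k(Y))$, and functoriality of the stalk-at-generic-point construction makes $\Phi$ a homomorphism.

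For injectivity: suppose $\sigma \in Aut(X/Y)$ induces the identity on $k(X)$. Since $X$ is an integral scheme, it is in particular reduced and irreducible, so for any affine open $U = Spec(A)$ the restriction map $\mathcal{O}_X(U) \to k(X)$ is injective; an automorphism of $X$ acting trivially on $k(X)$ must therefore act trivially on every $\mathcal{O}_X(U)$, hence on the structure sheaf, and (because $X$ is reduced, so that a scheme map out of it is determined by its effect on global/local sections into the function field, combined with the fact that $\sigma$ permutes the points according to the prime ideals) must be the identity morphism. One should phrase this carefully using the identification of $k(X)$ with $Rat(X)$ from \S 2.4 and the fact that each point of an integral scheme is recoverable from its local ring inside $k(X)$.

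The substantive part — and the main obstacle — is surjectivity. Given $\rho \in Gal(k(X)/k(Y)) = Gal(k(X)/\phi^\sharp(k(Y)))$, one must produce an automorphism of the scheme $X$ over $Y$ inducing $\rho$. This is exactly where the hypothesis that $X$ is \emph{locally complete} over $Y$ enters: for each pair of affine opens $V \subseteq Y$, $U \subseteq \phi^{-1}(V)$, local completeness hands us an affine open $W \subseteq \phi^{-1}(V)$ and an $\phi^\sharp(\mathcal{O}_Y(V))$-algebra isomorphism $\widetilde{\rho}: \mathcal{O}_X(U) \to \mathcal{O}_X(W)$ from which $\rho$ is induced. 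The plan is to run this over an affine cover of $Y$ and the induced cover of $X$, obtaining local isomorphisms $Spec(\mathcal{O}_X(U)) \to Spec(\mathcal{O}_X(W))$ over $Y$, and then to glue them into a global morphism $\sigma_\rho: X \to X$ over $Y$. Checking that these local pieces are compatible on overlaps is where one uses that everything is happening inside the fixed algebraic closure $\Omega$ of $k(X)$ and that all the maps involved are compatible with the embeddings into $k(X)$: two local isomorphisms that induce the same automorphism $\rho$ of the function field must agree on the overlap, again because $X$ is integral and sections inject into $k(X)$. Finally, applying the same construction to $\rho^{-1}$ gives a two-sided inverse, so $\sigma_\rho \in Aut(X/Y)$ and $\Phi(\sigma_\rho) = \rho$.

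Once $\Phi$ is shown to be an isomorphism, the second assertion is immediate: by hypothesis $k(X)/k(Y)$ is algebraic Galois, and we have just verified $Aut(X/Y) \cong Gal(k(X)/k(Y))$ naturally, which are precisely the two conditions in \emph{Definition 2.8} for $X$ to be Galois over $Y$ — hence $X$ is a Galois cover of $Y$. I expect the gluing/compatibility verification in the surjectivity step to require the most care, since one must check that the chosen affine opens $W$ can be organized coherently; the safest route is probably to first reduce, via \emph{Lemma 2.4}, to the case where $X$ is essentially affine in $k(X)$, so that every affine open of $X$ is literally $Spec$ of a subring of $k(X) \subseteq \Omega$ and the local isomorphisms are forced to be restrictions of $\rho$ itself, making compatibility on overlaps automatic.
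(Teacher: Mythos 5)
Your proposal follows essentially the same route as the paper's own proof: reduce to the essentially affine case via the realization lemma, define the homomorphism by the action on the generic stalk $\mathcal{O}_{X,\xi}=k(X)$, get injectivity from the fact that sections of an integral scheme inject into the function field, and get surjectivity by using local completeness to produce affine-local isomorphisms induced by $\rho$ and gluing them (the paper checks the overlap compatibility of the underlying point maps with a Kolmogorov-space argument, which your essentially-affine reduction streamlines). The stray mention of a ``counting argument'' is never used and is not needed; otherwise the plan matches the paper.
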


\begin{remark}
The conclusion of \emph{Theorem 3.1} can be regarded as a generalization of
several well-known related results in \cite{f-k,sga1,sv1,sv2,sz} for the case of integral varieties since the structure
morphism $\phi$ here is not necessarily of finite type.
\end{remark}

\subsection{Applications of the criterion}

Let $X$ and $Y$ be integral varieties of the same dimension. Let $f :X\rightarrow Y$ be a surjective morphism.

\begin{lemma}
Assume that $X$ is locally
quasi-galois over $Y$ by $f$. Then $X$ is $qc$ over $Y$.
\end{lemma}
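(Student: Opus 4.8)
The plan is to produce, essentially for free, a reduced affine covering of $X$ with values in a fixed algebraic closure $\Omega$ of $k(X)$, and then to observe that the hypothesis of being locally quasi-galois makes that covering automatically quasi-galois closed over $Y$ by $f$, since it forces every conjugation of a chart ring to be trivial.

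First I would arrange that $X$ is essentially affine in $k(X)$. Since $X$ is an integral variety, \emph{Lemma 2.3} furnishes an essentially affine realization $Z$ of $X$ with $k(Z)=k(X)$ together with an isomorphism $X\cong Z$; transporting the morphism $f$ along this isomorphism yields a surjective morphism $Z\to Y$ for which $Z$ is again locally quasi-galois over $Y$ (all the rings of sections involved, and hence all conjugations of them, are identified as subrings of $k(X)=k(Z)$), and a reduced affine covering of $Z$ quasi-galois closed over $Y$ transports back to one of $X$. So I may assume that $X$ itself is essentially affine in $k(X)$: every affine open $U\subseteq X$ satisfies $U=Spec(\mathcal{O}_X(U))$ with $\mathcal{O}_X(U)\subseteq k(X)\subseteq\Omega$.

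Next I would take $\mathcal{C}_X$ to be the family of all local charts $(U,\mathrm{id};\mathcal{O}_X(U))$, with $U$ ranging over the affine open subsets of $X$ and indexed by the open sets themselves, so that $\mathcal{C}_X$ is reduced; by the essentially affine property this is an affine covering of $X$ with values in the field $\Omega$ in the precise sense of \emph{Definition 2.1}. To verify that $\mathcal{C}_X$ is quasi-galois closed over $Y$ by $f$, fix an arbitrary chart $(U_\alpha,\phi_\alpha;A_\alpha)\in\mathcal{C}_X$, an affine open $V_\alpha\subseteq Y$ with $U_\alpha\subseteq f^{-1}(V_\alpha)$, and a conjugation $H_\alpha$ of $A_\alpha$ over $B_\alpha=f^\sharp(i_Y(\mathcal{O}_Y(V_\alpha)))$. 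Applying the definition of locally quasi-galois to the pair $U_\alpha\subseteq f^{-1}(V_\alpha)$, the ring $A_\alpha=\mathcal{O}_X(U_\alpha)$ lies in $\Omega$ and admits exactly one conjugation over $B_\alpha$; since $A_\alpha$ is itself a conjugation of $A_\alpha$ over $B_\alpha$ via the identity automorphism of $k(X)=Fr(A_\alpha)$, that unique conjugation must be $A_\alpha$, i.e. $H_\alpha=A_\alpha$. Hence the chart $(U_\alpha,\phi_\alpha;A_\alpha)$ itself can play the role of the chart $(U_\alpha',\phi_\alpha';A_\alpha')$ demanded in the definition of a quasi-galois closed covering: it lies in $\mathcal{C}_X$, it satisfies $U_\alpha\subseteq f^{-1}(V_\alpha)$, and its ring $A_\alpha$ equals $H_\alpha$. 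As the chart, the affine open $V_\alpha$, and the conjugation $H_\alpha$ were arbitrary, $\mathcal{C}_X$ is quasi-galois closed over $Y$ by $f$, and therefore $X$ is $qc$ over $Y$.

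The delicate point is not the quasi-galois closedness, which is essentially tautological once the uniqueness clause in the definition of locally quasi-galois is unwound, but rather the reduction to the essentially affine realization and the check that the chosen covering has values in the field $\Omega$ strictly, i.e. with $\mathcal{O}_X(U_\alpha)=A_\alpha$ and $U_\alpha=Spec(A_\alpha)$ holding on the nose rather than merely up to isomorphism. That strictness is precisely what \emph{Lemma 2.3} supplies, so the argument hinges on first placing ourselves in the essentially affine setting; thereafter no nontrivial conjugation of a chart ring can occur, and a covering with values in $\Omega$ is trivially closed under conjugation because there is nothing to close up.
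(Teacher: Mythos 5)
Your argument is correct and is essentially the paper's own: the paper disposes of this lemma with ``immediate from definition,'' and what you do is exactly that unwinding --- after the \emph{Lemma 2.3} reduction to the essentially affine setting, the uniqueness clause in \emph{Definition 2.9} forces every conjugation $H_\alpha$ of a chart ring $A_\alpha$ over $B_\alpha$ to equal $A_\alpha$ itself, so the tautological reduced covering by affine charts satisfies \emph{Definition 2.10}. Your closing remark about strictness (values in $\Omega$ holding on the nose only after passing to the essentially affine realization) is a fair observation, and your handling of it is consistent with how the paper itself silently identifies $X$ with its realization.
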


\begin{proof}
It is immediate from definition.
\end{proof}

\begin{lemma}
Let $X$ be $qc$ over $Y$ by
$f$. Then the function field $k\left( X\right) $ is canonically Galois over $f(Y)$; moreover, $X$ is Galois over $Y$ by $f$.
\end{lemma}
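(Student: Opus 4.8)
The plan is to deduce \emph{Lemma 3.5} from \emph{Theorem 3.1}: it suffices to verify the two hypotheses of that theorem for the surjective morphism $f:X\to Y$, namely that $k(X)$ is canonically an algebraic Galois extension of $f^{\sharp}(k(Y))$ and that $X$ is locally complete over $Y$ by $f$. Since $X$ and $Y$ are integral varieties of the same dimension and $f$ is surjective, $f^{\sharp}$ embeds $k(Y)$ into $k(X)$ and makes $k(X)$ an algebraic extension of $f^{\sharp}(k(Y))$; moreover $k(X)$ has characteristic zero (being a field over $\mathbb{Q}$, as the generic point of $X$ lies over the generic point of $Spec(\mathbb{Z})$), so this extension is automatically separable. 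Hence the only genuine point to check for the first hypothesis is \emph{normality}.

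To establish normality I would work with a single local chart. By the definition of \emph{qc} there is a reduced affine covering $\mathcal{C}_{X}=\{(U_{\alpha},\phi_{\alpha};A_{\alpha})\}$ of $X$ with values in an algebraic closure $\Omega$ of $k(X)$ which is quasi-galois closed over $Y$ by $f$. Fix a chart $(U_{\alpha},\phi_{\alpha};A_{\alpha})$ and an affine open $V_{\alpha}\subseteq Y$ with $U_{\alpha}\subseteq f^{-1}(V_{\alpha})$, and put $B_{\alpha}=f^{\sharp}(i_{Y}(\mathcal{O}_{Y}(V_{\alpha})))$, so that $B_{\alpha}\subseteq A_{\alpha}$, $Fr(A_{\alpha})=k(X)$ and $Fr(B_{\alpha})\cong k(Y)$. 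Given an arbitrary $f^{\sharp}(k(Y))$-embedding $\sigma:k(X)\hookrightarrow\Omega$, I would observe that $\sigma$ fixes $B_{\alpha}$ pointwise, so $\sigma(A_{\alpha})$ is a conjugation of $A_{\alpha}$ over $B_{\alpha}$ inside $\Omega$. By the quasi-galois closed condition, $\sigma(A_{\alpha})$ coincides with the ring $A_{\alpha}'$ of some chart of $\mathcal{C}_{X}$, and therefore $\sigma(k(X))=\sigma\big(Fr(A_{\alpha})\big)=Fr(A_{\alpha}')=k(X)$. Thus every $k(Y)$-embedding of $k(X)$ into $\Omega$ is an automorphism, which is precisely normality of $k(X)$ over $f^{\sharp}(k(Y))$; combined with separability this gives that $k(X)$ is canonically algebraic Galois over $k(Y)$ by $f$.

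For the second hypothesis I would invoke the implication recorded just before \S 3, that quasi-galois closed implies locally complete; hence $X$ is locally complete over $Y$ by $f$. Both hypotheses of \emph{Theorem 3.1} now hold, so that theorem produces a natural isomorphism $Aut(X/Y)\cong Gal\big(k(X)/k(Y)\big)$, which together with the Galois property of the field extension is exactly the assertion that $X$ is Galois over $Y$ by $f$ in the sense of \emph{Definition 2.7}.

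The main obstacle I anticipate is the transition from the ring-level closure condition in \emph{Definition 2.9} to the field-level normality: one must check carefully that a $k(Y)$-embedding $\sigma$ of $k(X)$ into $\Omega$ really does restrict to a conjugation of the chosen chart ring $A_{\alpha}$ over $B_{\alpha}$ (so that the uniqueness-of-conjugation clause applies), and that the conclusion $\sigma(k(X))=k(X)$ is independent of the chosen chart, so that the various local conclusions are compatible. Once this is done, the remaining steps are formal applications of \emph{Theorem 3.1} and the implications among the local conditions in \S 2.5.
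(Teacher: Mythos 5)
Your proposal takes essentially the same route as the paper: the paper's entire proof of this lemma is that it ``is immediate from \emph{Theorem 3.1}'', and what you write is precisely the verification of that theorem's two hypotheses (normality of $k(X)$ over $f^{\sharp}(k(Y))$ via the closedness of conjugations in the quasi-galois closed condition, separability from characteristic zero, and local completeness from the stated implication that quasi-galois closed implies locally complete), followed by the application of \emph{Theorem 3.1}. The one assertion that neither you nor the paper really secures is that equal dimension plus surjectivity forces $k(X)$ to be algebraic over $f^{\sharp}(k(Y))$ (this can fail for integral varieties that are not of finite type over $\mathbb{Z}$), but that is a gap in the lemma's own formulation rather than in your argument, and algebraicity holds by construction in every use the paper makes of the lemma.
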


\begin{proof}
It is immediate from \emph{Theorem 3.1}.
\end{proof}

\begin{lemma}
Suppose that for any affine
open set $V$ of $Y$, the set $f^{-1}(V)$ is affine open in $X$ and the ring
$f^{\sharp}(i_{Y}(\mathcal{O}_{Y}(V)))$ is equal to $i_{X}({{\mathcal{O}_{X}}
^{Aut(X/Y)}(f^{-1}(V))})$. Then $X$ is Galois over $Y$ by $f$.
\end{lemma}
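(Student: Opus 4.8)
The goal is to deduce this lemma from \emph{Theorem 3.1}, so the plan is to verify its two hypotheses: that $k(X)$ is canonically an algebraic Galois extension of $k(Y)$ by $f$, and that $X$ is locally complete over $Y$ by $f$. First I would unwind the hypothesis. Fix an affine open set $V\subseteq Y$; by assumption $U\triangleq f^{-1}(V)$ is affine open in $X$, and, writing $A=\mathcal{O}_X(U)$, $B=f^{\sharp}(i_Y(\mathcal{O}_Y(V)))$, and $G=Aut(X/Y)$, we are given $B=i_X(\mathcal{O}_X^{\,G}(U))=A^{G}$ (identifying rings with their images in $k(X)$ as the paper does in \S2.4). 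Passing to fraction fields over the generic point, $k(Y)=Fr(B)=Fr(A^{G})$ and $k(X)=Fr(A)$, and every $\sigma\in G$ restricts to a $k(Y)$-automorphism $\sigma_\xi^{\sharp}$ of $k(X)$ fixing exactly the elements coming from $B$. So $G$ acts on $k(X)$ with fixed field inside $k(Y)$; since $B\subseteq k(Y)$ is fixed, the fixed field of the image of $G$ in $Aut(k(X))$ is contained in $k(Y)$, and in fact equals $k(Y)$ because $k(Y)=Fr(A^{G})$ is generated by $G$-invariant elements. Combined with the standard fact that $k(X)/k(Y)$ is finitely generated (both are function fields of integral varieties of the same dimension, so the extension is algebraic), Artin's theorem on fixed fields gives that $k(X)/k(Y)$ is a finite Galois extension and $Gal(k(X)/k(Y))$ is the image of $G$.

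Next I would check local completeness. Given $\rho\in Gal(k(X)/f^{\sharp}(k(Y)))$ and affine opens $V\subseteq Y$, $U\subseteq f^{-1}(V)$, I need an affine open $W\subseteq f^{-1}(V)$ and a $f^{\sharp}(\mathcal{O}_Y(V))$-algebra isomorphism $\widetilde\rho:\mathcal{O}_X(U)\to\mathcal{O}_X(W)$ inducing $\rho$. By the previous paragraph $\rho=\sigma_\xi^{\sharp}$ for some $\sigma\in G=Aut(X/Y)$; then $W\triangleq\sigma(U)$ is an affine open subset of $X$, and since $\sigma$ is an automorphism over $Y$ it maps $f^{-1}(V)$ into $f^{-1}(V)$, so $W\subseteq f^{-1}(V)$. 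The comorphism $\sigma^{\sharp}:\mathcal{O}_X(W)\to\mathcal{O}_X(U)$ is a ring isomorphism over $f^{\sharp}(\mathcal{O}_Y(V))$ (because $\sigma$ is a $Y$-morphism), and its inverse $\widetilde\rho$ is the required map; that it induces $\rho$ at the generic point is exactly the compatibility $\sigma_\xi^{\sharp}=\rho$. Hence $X$ is locally complete over $Y$ by $f$.

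With both hypotheses of \emph{Theorem 3.1} in hand, that theorem yields a natural isomorphism $Aut(X/Y)\cong Gal(k(X)/k(Y))$, i.e., $X$ is a Galois cover of $Y$ by $f$ in the sense of \emph{Definition 2.7}. I expect the main obstacle to be the first paragraph: one must be careful that the condition $f^{\sharp}(i_Y(\mathcal{O}_Y(V)))=i_X(\mathcal{O}_X^{\,G}(f^{-1}(V)))$ holds \emph{for every} affine open $V$, so as to glue the local fixed-field identifications into the single field statement $k(Y)=k(X)^{G}$, and to confirm that $G$ acts faithfully on $k(X)$ (so that "Galois by $f$" in \emph{Definition 2.7} is literally satisfied rather than just up to the kernel). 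Faithfulness follows because an automorphism of an integral scheme acting trivially on the function field is trivial on each affine chart, hence trivial; the gluing is routine once one observes the fixed-field computation is independent of the chosen chart $V$.
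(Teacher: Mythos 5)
Your plan (verify the two hypotheses of \emph{Theorem 3.1} directly) is reasonable and differs from the paper, which disposes of this lemma in one line by reducing to \emph{Lemmas 3.3--3.4}, i.e.\ by observing that the stated hypothesis puts one in the (locally) quasi-galois / qc situation and then citing \emph{Theorem 3.1} through \emph{Lemma 3.4}. But your execution has genuine gaps. First, the fixed-field step is argued in the wrong direction: from $B=A^{G}$ you correctly get $k(Y)=Fr(A^{G})\subseteq k(X)^{G}$, but the inclusion you actually need, $k(X)^{G}\subseteq k(Y)$, is only asserted --- the justification you give (``$B\subseteq k(Y)$ is fixed'', ``$k(Y)$ is generated by $G$-invariant elements'') proves the trivial containment, not this one. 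The usual way to pass from $Fr(A)^{G}\supseteq Fr(A^{G})$ to equality is to clear denominators with a norm $\prod_{\sigma\in G}\sigma(b)$, which requires $G=Aut(X/Y)$ (or at least each orbit) to be finite; nothing in the hypotheses gives that, since $f$ is not assumed of finite type. For the same reason the appeal to Artin's theorem to conclude that $k(X)/k(Y)$ is a \emph{finite} Galois extension with group equal to the image of $G$ is unwarranted: even granting $k(X)^{G}=k(Y)$ and algebraicity, an infinite group of automorphisms need only have dense image in $Gal(k(X)/k(Y))$, not full image.

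Second, your verification of local completeness is circular. You take $\rho\in Gal(k(X)/k(Y))$ and write $\rho=\sigma_{\xi}^{\sharp}$ for some $\sigma\in Aut(X/Y)$; but the statement that every element of the Galois group is induced by an actual automorphism of $X$ over $Y$ is exactly the surjectivity that \emph{Theorem 3.1} (whose proof uses local completeness in \emph{Step 4}) is supposed to deliver, and your only ground for it is the faulty Artin step above. If that surjectivity were already available, you would not need \emph{Theorem 3.1} at all. A non-circular argument has to produce, for an arbitrary $\rho$ and an arbitrary affine open $U\subseteq f^{-1}(V)$ (not just $U=f^{-1}(V)$, which is all the hypothesis speaks about), an affine open $W\subseteq f^{-1}(V)$ with $\rho(\mathcal{O}_{X}(U))=\mathcal{O}_{X}(W)$; this is the substantive content hidden in the paper's notions of locally quasi-galois / quasi-galois closed, and it is precisely what your proposal does not supply. (A small additional slip: an individual $\sigma\in G$ need not fix ``exactly'' the elements of $B$; only the joint fixed ring is $B$.)
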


\begin{proof}
It is immediate from \emph{Lemmas 3.4-5}.
\end{proof}

\emph{Lemmas 3.3-5} will be used in the following section.

\subsection{Proof of Theorem 3.1}

Now we prove \emph{Theorem 3.1}.

\begin{proof}
\textbf{(Proof of Theorem 3.1)} The process of the proof here is based on a
trick originally in \cite{an} and a refinement in \cite{an5}.

 Without loss of generality, by \emph{Lemma
2.3} suppose that $X$ and $Y$ are essentially affine in the function fields $k(X)$ and $k(Y)$,
respectively.

Here, the function field $k(X)$ will be taken as the set of elements
of the form ${\left( U,f\right)}$. We will
identify the function field $k(Y)$ with the subfield $\phi^{\sharp}\left(k\left( Y\right)\right)$
of $k(X)$.

In the following we will proceed in several steps to prove that there is a
group isomorphism
$$
Aut\left( X/Y\right) \cong Gal\left( k\left( X\right) /k\left(
Y\right) \right).
$$

\textbf{Step 1.} Construct a map
\begin{equation*}
t:Aut\left( X/Y\right) \longrightarrow Gal\left( k\left( X\right) /k\left(
Y\right) \right)
\end{equation*}
between sets.

In fact, given an automorphism
\begin{equation*}
\sigma =\left( \sigma ,\sigma ^{\sharp}\right) \in Aut_{k}\left( X/Y\right) ,
\end{equation*}
i.e.,
\begin{equation*}
\sigma : X \longrightarrow X
\end{equation*}
is a homeomorphism;
\begin{equation*}
\sigma ^ {\sharp}:\mathcal{O}_{X} \rightarrow \sigma _{\ast }\mathcal{O}_{X}
\end{equation*}
is an isomorphism of sheaves of rings on $X$.

Let $\xi$ be the generic point of $X$. We have
  $$\sigma (\xi)=\xi;$$ it follows that
\begin{equation*}
\sigma ^{\sharp}:k\left( X\right)=\mathcal{O}_{X,\xi } \rightarrow \sigma
_{\ast }\mathcal{O}_{X,\xi }=k\left( X\right)
\end{equation*}
is an automorphism of $k(X)$.

Denote by $\sigma ^{\sharp-1}$ the inverse of the ring isomorphism
\begin{equation*}
\sigma ^{\sharp}:k(X)\to k(X).
\end{equation*}

Take any open subset $U$ of $X$. We have the restriction
\begin{equation*}
\sigma=(\sigma ,\sigma ^{\sharp}): (U,\mathcal{O}_{X}|_{U}) \longrightarrow
(\sigma(U),\mathcal{O}_{X}|_{\sigma(U)})
\end{equation*}
of open subschemes. That is,
\begin{equation*}
\sigma^{\sharp}:\mathcal{O}_{X}|_{\sigma(U)} \rightarrow \sigma_{\ast}
\mathcal{O}_{X}|_{U}
\end{equation*}
is an isomorphism of sheaves on $\sigma (U)$. In particular,
\begin{equation*}
\sigma^{\sharp}:\mathcal{O}_{X}(\sigma(U)) =\mathcal{O}_{X}|_{\sigma(U)}(
\sigma(U)) \rightarrow \mathcal{O}_{X}(U)=\sigma_{\ast} \mathcal{O}
_{X}|_{U}(\sigma(U))
\end{equation*}
is an isomorphism of rings.

For every $f \in \mathcal{O}_{X}|_{U}(U)$, we have
\begin{equation*}
f \in \sigma_{\ast}\mathcal{O}_{X}|_{U}(\sigma(U));
\end{equation*}
hence
\begin{equation*}
\sigma^{\sharp -1}(f) \in \mathcal{O}_{X}(\sigma(U)).
\end{equation*}

Now we have a map
\begin{equation*}
t:Aut_{k}\left( X/Y\right) \longrightarrow Gal( k\left( X\right)
/k( Y ))
\end{equation*}
between sets given by
\begin{equation*}
\sigma =(\sigma ,\sigma ^{\sharp})\longmapsto t(\sigma)=\left\langle \sigma
,\sigma ^{\sharp -1}\right\rangle
\end{equation*}
such that
\begin{equation*}
\left\langle \sigma ,\sigma ^{\sharp-1}\right\rangle :\left( U,f\right)\in
\mathcal{O}_{X}(U) \longmapsto \left( \sigma \left( U\right) ,\sigma
^{\sharp-1}\left( f\right) \right)\in \mathcal{O}_{X}(\sigma(U))
\end{equation*}
is a mapping of $k(X)$ into $k(X)$.

\textbf{Step 2.} Prove that the map
\begin{equation*}
t:Aut\left( X/Y\right) \longrightarrow Gal\left( k\left( X\right) /k\left(
Y\right) \right)
\end{equation*}
is a homomorphism between groups.

In deed, given any
\begin{equation*}
\sigma =\left( \sigma ,\sigma ^{\sharp}\right) \in Aut\left( X/Y\right).
\end{equation*}
For any $(U,f),(V,g) \in k(X)$, we have
\begin{equation*}
(U,f)\cdot(V,g)=(U\cap V, f\cdot g);
\end{equation*}
\begin{equation*}
(U,f)+(V,g)=(U\cap V, f+g).
\end{equation*}
Then
$$
\left\langle \sigma ,\sigma ^{\sharp-1}\right\rangle((U,f)\cdot(V,g)) \\
=\left\langle \sigma ,\sigma ^{\sharp-1}\right\rangle((U,f))\cdot
\left\langle \sigma ,\sigma ^{\sharp-1}\right\rangle((V,g));
$$
$$
\left\langle \sigma ,\sigma ^{\sharp-1}\right\rangle((U,f)+(V,g))
=\left\langle \sigma ,\sigma ^{\sharp-1}\right\rangle((U,f))+ \left\langle
\sigma ,\sigma ^{\sharp-1}\right\rangle((V,g)).
$$
Hence, $\left\langle \sigma ,\sigma ^{\sharp-1}\right\rangle$ is an
automorphism of $k\left( X\right) .$

It needs to prove that $\left\langle \sigma ,\sigma ^{\sharp-1}\right\rangle$
is an automorphism over $k(Y)$. Consider the given structure morphism
\begin{equation*}
\phi=(\phi,\phi^{\sharp}):(X,\mathcal{O} _{X})\rightarrow (Y,\mathcal{O}_{Y})
\end{equation*}
between schemes. As $\phi(\xi)$ is the generic point of $Y$ and $\xi$
is invariant under any automorphism $\sigma \in Aut\left( X/Y\right)$, it is seen that $
\sigma^{\sharp}: \mathcal{O}_{X,\xi} \rightarrow \mathcal{O}_{X,\xi}$ is an
isomorphism of algebras over the subalgebra $$k(Y)=
\phi^{\sharp}(k(Y))=\phi^{\sharp}(\mathcal{O}_{Y,\phi(\xi)}),$$ i.e.,
\begin{equation*}
\left\langle \sigma ,\sigma
^{\sharp-1}\right\rangle|_{\phi^{\sharp}(k(Y))}=id_{\phi^{\sharp}(k(Y))}.
\end{equation*}

Hence,
\begin{equation*}
t(\sigma)=\left\langle \sigma ,\sigma ^{\sharp-1}\right\rangle \in Gal\left(k\left(
X\right) /k\left( Y\right) \right) .
\end{equation*}
This also proves that $t$ is well-defined as a map between sets.

On the other hand,
 take any two automorphisms
\begin{equation*}
\sigma =\left( \sigma ,\sigma ^{\sharp }\right) ,\delta =\left( \delta
,\delta ^{\sharp }\right) \in Aut\left( X/Y\right) .
\end{equation*}
We have
\begin{equation*}
\delta ^{\sharp -1}\circ \sigma ^{\sharp -1}=(\delta \circ \sigma )^{\sharp
-1}
\end{equation*} according to preliminary facts on morphisms of schemes (see \cite{ega,hartshorne,iitaka}).

Then
\begin{equation*}
\left\langle \delta ,\delta ^{\sharp -1}\right\rangle \circ \left\langle
\sigma ,\sigma ^{\sharp -1}\right\rangle =\left\langle \delta \circ \sigma
,\delta ^{\sharp -1}\circ \sigma ^{\sharp -1}\right\rangle .
\end{equation*}

Hence,
\begin{equation*}
t:Aut\left( X/Y\right) \rightarrow Gal\left(k\left(
X\right) /k\left( Y\right) \right)
\end{equation*}
is a homomorphism of groups.

\textbf{Step 3.} Prove that
\begin{equation*}
t:Aut\left( X/Y\right) \longrightarrow Gal\left( k\left( X\right) /k\left(
Y\right) \right)
\end{equation*}
is an injective homomorphism.

In deed, take any two automorphisms $\sigma ,\sigma ^{\prime }\in {Aut}\left( X/Y\right) $ such
that $$t\left( \sigma \right) =t\left( \sigma ^{\prime }\right) .$$ We have
\begin{equation*}
\left( \sigma \left( U\right) ,\sigma ^{\sharp-1}\left( f\right) \right)
=\left( \sigma ^{\prime }\left( U\right) ,\sigma ^{\prime \sharp-1}\left(
f\right) \right)
\end{equation*}
for each element $\left( U,f\right) \in k\left( X\right) .$ In particular,
\begin{equation*}
\left( \sigma \left( U_{0}\right) ,\sigma ^{\sharp-1}\left( f\right) \right)
=\left( \sigma ^{\prime }\left( U_{0}\right) ,\sigma ^{\prime
\sharp-1}\left( f\right) \right)
\end{equation*}
holds for any $f\in \mathcal{O}_{X}(U_{0})$ and any affine open subset $U_{0}$ of $X$.

It is seen that
there are three rings
$$
A_0=\mathcal{O}_{X}(U_{0});$$ $$B_0=\mathcal{O}_{X}(\sigma(U_{0}));$$ $$
{ B_{0}}^{\prime}=\mathcal{O}_{X}(\sigma^{\prime}(U_{0})),
$$
as subrings of $k(X)$, satisfying the property
\begin{equation*}
B_0=\sigma^{\sharp-1}(A_0) =\sigma^{\prime \sharp-1}(A_0)=B_{0}^{\prime}.
\end{equation*}

Then
\begin{equation*}
\sigma |_{U_{0}}=\sigma ^{\prime }|_{U_{0}}
\end{equation*}
holds as isomorphisms of schemes in virtue of preliminary facts
on morphisms of affine schemes (see \cite{ega,hartshorne,iitaka}).

Let $U_{0}$ run through all affine open
sets of $X$. We must have
\begin{equation*}
\sigma =\sigma ^{\prime }
\end{equation*}
on the whole of $X$. This proves that $t$ is an injection.

\textbf{Step 4.} Prove that
\begin{equation*}
t:Aut\left( X/Y\right) \longrightarrow Gal\left( k\left( X\right) /k\left(
Y\right) \right)
\end{equation*}
is a surjective homomorphism.

In fact, fixed an automorphism $\rho\in Gal\left(k\left(
X\right) /k\left( Y\right) \right)$.
As
\begin{equation*}
k(X)=\{(U_{f},f):f\in \mathcal{O}_{X}(U_{f})\text{ and }U_{f}\subseteq X
\text{ is open}\},
\end{equation*}
we have
\begin{equation*}
\rho :\left( U_{f},f\right) \in k\left( X\right) \longmapsto \left( U_{\rho
\left( f\right) },\rho \left( f\right) \right) \in k\left( X\right) 
\end{equation*}
according to \emph{Proposition 1.44} of \cite{iitaka},
where $U_{f}$ and $U_{\rho (f)}$ are open sets in $X$, $f$ is
contained in $\mathcal{O}_{X}(U_{f})$, and $\rho (f)$ is contained
in $ \mathcal{O}_{X}(U_{\rho (f)})$.

We will proceed in several sub-steps to prove that for each element
\begin{equation*}
\rho\in Gal\left(k\left(
X\right) /k\left( Y\right) \right)
\end{equation*}
there is a unique element
\begin{equation*}
\lambda\in {Aut}(X/Y)
\end{equation*}
such that
\begin{equation*}
t(\lambda)=\rho.
\end{equation*}

\textbf{Substep 4-a.} Fixed any affine open set $V$ of $Y$. Show that for
each affine open set $U\subseteq \phi^{-1}(V)$ there is an affine open set $%
U_{\rho}$ in $X$ such that $\rho$ determines an isomorphism between affine
schemes $(U,\mathcal{O}_{X}|_{U})$ and $(U_{\rho},\mathcal{O}
_{X}|_{U_{\rho}})$.

In fact, take any affine open sets $V\subseteq Y$ and $U\subseteq
\phi^{-1}(V)$. We have
\begin{equation*}
A= \mathcal{O}_{X}(U) =\{\left( U_{f},f\right) \in k\left( X\right)
:U_{f}\supseteq U\}.
\end{equation*}
Put
\begin{equation*}
B=\{\left( U_{\rho \left( f\right) },\rho \left( f\right) \right) \in
k\left( X\right) :\left( U_{f},f\right) \in A \}.
\end{equation*}

Then there exists an affine open set $W\subseteq \phi^{-1}(V)$ and an isomorphism $$\widetilde{\rho}=\rho\mid_{A}:A\to B=\mathcal{O}_{X}(W)$$  such that $\rho$ induces naturally from $\widetilde{\rho}$ according to the assumption that $X$ is locally complete over $Y$. Write $U_{\rho}\triangleq W$.

Therefore, by $\rho$ we have a unique isomorphism
\begin{equation*}
\lambda_{U}=\left(\lambda_{U}, \lambda_{U}^{\sharp} \right): (U, \mathcal{O}
_{X}|_{U}) \rightarrow (U_{\rho}, \mathcal{O}_{X}|_{U_{\rho}})
\end{equation*}
of the affine open subscheme in $X$ such that
\begin{equation*}
\rho |_{\mathcal{O}_{X}(U)}=\lambda_{U}^{\sharp -1}: \mathcal{O}_{X}(U)
\rightarrow \mathcal{O}_{X}(U_{\rho}).
\end{equation*}

\textbf{Substep 4-b.} Take any affine open sets $V\subseteq Y$ and $%
U,U^{\prime}\subseteq\phi^{-1}(V)$. Show that
\begin{equation*}
\lambda_{U}|_{U\cap U^{\prime}}=\lambda_{U^{\prime}}|_{U\cap U^{\prime}}
\end{equation*}
holds as morphisms of schemes.

In fact, by the above construction in $\emph{Substep 4-a}$, for each $%
\lambda_{U}$ it is seen that $\lambda_{U}^{\sharp}$ and $\lambda^{
\sharp}_{U^{\prime}}$ coincide restricted to the intersection $U\cap
U^{\prime}$ as homomorphisms of rings since we have
\begin{equation*}
\rho |_{\mathcal{O}_{X}(U\cap U^{\prime})}=\lambda_{U}|_{U\cap
U^{\prime}}^{\sharp -1}: \mathcal{O}_{X}(U\cap U^{\prime}) \rightarrow
\mathcal{O}_{X}({(U\cap U^{\prime})}_{\rho});
\end{equation*}
\begin{equation*}
\rho |_{\mathcal{O}_{X}(U\cap U^{\prime})}=\lambda_{U^{\prime}}|_{U\cap
U^{\prime}}^{\sharp -1}: \mathcal{O}_{X}(U\cap U^{\prime}) \rightarrow
\mathcal{O}_{X}({(U\cap U^{\prime})}_{\rho}).
\end{equation*}

On the other hand, for any point $x\in U\cap U^{\prime}$, we must have
\begin{equation*}
\lambda_{U}(x)=\lambda_{U^{\prime}}(x).
\end{equation*}

Otherwise, if $\lambda _{U}(x)\not=\lambda _{U^{\prime }}(x)$, will have an
affine open subset $X_{0}$ of $X$ that contains either of the points $%
\lambda _{U}(x)$ and $\lambda _{U^{\prime }}(x)$ but does not contain the
other since the underlying space of $X$ is a Kolmogorov space
(see \cite{ega,hartshorne,iitaka}). Assume $\lambda _{U}(x)\in X_{0}$ and $\lambda
_{U^{\prime }}(x)\not\in X_{0}$. We choose an affine open subset $U_{0}$ of $%
X$ such that $x\in U_{0}\subseteq U\cap U^{\prime }$ and $\lambda
_{U}(U_{0})\subseteq X_{0}$ since we have
\begin{equation*}
\lambda _{U}(U\cap U^{\prime })={(U\cap U^{\prime })}_{\rho }\subseteq
U_{\rho };
\end{equation*}
\begin{equation*}
\lambda _{U^{\prime }}(U\cap U^{\prime })={(U\cap U^{\prime })}_{\rho
}\subseteq U_{\rho }^{\prime }.
\end{equation*}
However, for each $\lambda _{U}$, we have
\begin{equation*}
\lambda _{U}(U_{0})=(U_{0})_{\rho }=\lambda _{U^{\prime }}(U_{0});
\end{equation*}
then
\begin{equation*}
\lambda _{U^{\prime }}(x)\in (U_{0})_{\rho }\subseteq X_{0},
\end{equation*}
where there will be a contradiction.

Hence, $\lambda _{U}$ and $\lambda
_{U^{\prime }}$ coincide on $U\cap U^{\prime }$ as mappings of topological
spaces.

\textbf{Substep 4-c.} By gluing the $\lambda_{U}$ along all such affine open
subsets $U$, we have a homeomorphism $\lambda$ of $X$ onto $X$ as a
topological space given by
\begin{equation*}
\lambda: x\in X \mapsto \lambda_{U}(x)\in X
\end{equation*}
where $x$ belongs to $U$ and $U$ is an affine open subset of $X$. We have $$
\lambda|_{U}=\lambda_{U}. $$

By $\emph{Substep4-b}$ it is seen that $\lambda $ is well-defined. Then we
have an automorphism, namely $\lambda $, of the scheme $(X,\mathcal{O}_{X})$
(see \cite{ega,hartshorne,iitaka}).

\textbf{Substep 4-d.} Show that $\lambda$ is contained in $Aut\left(
X/Y\right)$ satisfying
\begin{equation*}
t\left(\lambda\right)=\rho.
\end{equation*}

In deed, as $\rho$ is an isomorphism of $k(X)$ over $k(Y)
$, the isomorphism $\lambda_{U}$ is over $Y$ by $\phi$ for any affine
open subset $U$ of $X$; then $\lambda$ is an automorphism of $X$ over $Y$ by
$\phi$ such that $$t\left(\lambda\right)=\rho.$$

This proves that for each $\rho \in Gal\left(k\left(
X\right) /k\left( Y\right) \right) $, there exists $%
\lambda\in Aut\left( X/Y\right) $ such that
\begin{equation*}
t(\lambda)=\rho.
\end{equation*}
Hence, ${t}$ is surjective.

Therefore, by \emph{Steps 1-4} above, it is seen that there is a
group isomorphism
$$
t:Aut\left( X/Y\right) \longrightarrow Gal\left( k\left( X\right) /k\left(
Y\right) \right)
$$
by
\begin{equation*}
\sigma =(\sigma ,\sigma ^{\sharp })\longmapsto t(\sigma )=\left\langle
\sigma ,\sigma ^{\sharp -1}\right\rangle,
\end{equation*}
where $\left\langle \sigma ,\sigma ^{\sharp -1}\right\rangle $ is the map of
$k(X)$ into $k(X)$ given by
\begin{equation*}
\left( U,f\right) \in \mathcal{O}_{X}(U)\subseteq k\left( X\right)
\longmapsto \left( \sigma \left( U\right) ,\sigma ^{\sharp -1}\left(
f\right) \right) \in \mathcal{O}_{X}(\sigma (U))\subseteq k\left( X\right)
\end{equation*}
for any open set $U$ in $X$ and any element $f\in \mathcal{O}_{X}(U)$.

This completes the proof.
\end{proof}

\section{The Calculus and Construction of Galois Covers}

In this section
we will give a universal construction of Galois covers over a given integral variety.
For an arithmetic variety $X$, in general, we will have  many distinct Galois
covers $X_{et}$ over $X$ for the \'{e}tale fundamental group
$\pi^{et}_{1}(X;s)$ such that each $X_{et}$ is a
realization for the group $\pi^{et}_{1}(X;s)$, i.e., there will be isomorphisms
$$k(X_{et})\cong k(X;\Omega)^{un};$$
$$
Gal(k(X;\Omega)^{un}/k(X))\cong Aut(X_{et}/X)
$$
both in a natural manner.

Furthermore, all these Galois covers $X_{et}$ of $X$ can
have the same function fields $k(X;\Omega)^{un}$ but can not be isomorphic as schemes.
In other words, for any such Galois covers $X_{et}$ and $X_{et}^{\prime}$ over $X$, we have
$$X_{et}\not\cong X_{et}^{\prime};$$
$$k(X_{et})\cong k(X_{et}^{\prime});$$
$$Aut(X_{et}/X)\cong Aut(X_{et}^{\prime}/X).$$

\subsection{Calculus and existence of Galois covers}

Here we give the existence of  Galois covers of an integral variety such that the
Galois covers can be prescribed by any given
Galois extensions of the function field.

\begin{theorem}
For a field $L$ and an integral variety $Y$ such that $L$ is an algebraic Galois extension of $
k(Y)$, there exists an integral variety $X_{L}$ and an affine surjective morphism $f_{L}:X_{L}\rightarrow Y$ satisfying the properties:
\begin{itemize}
\item $L=k\left( X_{L}\right) $;

\item $X_{L}$ is essentially affine in $L$;

\item $X_{L}$ is Galois over $Y$ by $f_{L}$;

\item $X_{L}$ is locally quasi-galois over $Y$ by $f_{L}$;

\item $Aut(X_{L}/Y)$ acts on the fiber $f_{L}^{-1}(y)$ transitively for any $
y\in Y$;

\item For any affine open set $V$ of $Y$, there is an isomorphism
$$\mathcal{O}_{Y}(V)\cong{\mathcal{O}_{X_{L}}}
^{Aut(X_{L}/Y)}(f_{L}^{-1}(V))$$ of groups in a natural manner.
\end{itemize}
\end{theorem}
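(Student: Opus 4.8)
The plan is to construct $X_L$ explicitly as a scheme that is essentially affine in $L$, by taking, for each affine open set $V$ of $Y$, the integral closure (or rather, the appropriate conjugate subring) of $\mathcal{O}_Y(V)$ inside $L$ and gluing the resulting spectra along the gluing data inherited from $Y$. Concretely, fix an algebraic closure $\Omega$ of $L$ (so $\Omega \supseteq L \supseteq k(Y)$). By Lemma 2.3 we may assume $Y$ is essentially affine in $k(Y)$, so $k(Y) = \mathcal{O}_{Y,\xi}$ is realized as a field of germs and each $\mathcal{O}_Y(V)$ is literally a subring of $k(Y)$ with $Fr(\mathcal{O}_Y(V)) = k(Y)$. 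For each affine open $V \subseteq Y$ set $B_V \triangleq$ the integral closure of $\mathcal{O}_Y(V)$ in $L$; then $B_V$ is a subring of $L$ with $Fr(B_V) = L$ (because $L/k(Y)$ is algebraic and $B_V$ contains $\mathcal{O}_Y(V)$), and $B_V \cap B_{V'} = B_{V \cap V'}$ on overlaps, so the affine schemes $\mathrm{Spec}(B_V)$ glue via the restriction maps $B_V \hookrightarrow B_{V \cap V'}$ to produce an integral scheme $X_L$ with $k(X_L) = L$, essentially affine in $L$. The inclusions $\mathcal{O}_Y(V) \hookrightarrow B_V$ induce affine morphisms $\mathrm{Spec}(B_V) \to V$ that glue to an affine surjective $f_L : X_L \to Y$ (surjectivity: going-up for integral extensions, or lying-over). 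This establishes the first two bullets and the affineness/surjectivity of $f_L$.

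Next I would verify local quasi-galois-ness. Take affine opens $V \subseteq Y$ and $U \subseteq f_L^{-1}(V)$; on $U = \mathrm{Spec}(B_V)$ we have $\mathcal{O}_{X_L}(U) = B_V$, a subring of $\Omega$ with $Fr(B_V) = L$, and $f_L^\sharp(i_Y(\mathcal{O}_Y(V))) = \mathcal{O}_Y(V)$. The point is that any conjugation of $B_V$ over $\mathcal{O}_Y(V)$ inside $\Omega$ is a subring $\tau(B_V)$ where $\tau : L \to \tau(L)$ is a $k(Y)$-isomorphism of fields inside $\Omega$; but since $L/k(Y)$ is \emph{Galois}, every such $\tau$ satisfies $\tau(L) = L$, and moreover $\tau$ restricts to an automorphism of $L$ over $k(Y)$ which, being a ring automorphism fixing $\mathcal{O}_Y(V)$, carries the integral closure $B_V$ onto the integral closure of $\mathcal{O}_Y(V)$, i.e.\ $\tau(B_V) = B_V$. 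Hence $B_V$ is the one and only conjugation of itself over $\mathcal{O}_Y(V)$, which is exactly the local quasi-galois condition (Definition 2.13). By the implication ``locally quasi-galois $\implies$ locally complete'' noted in \S2.5, $X_L$ is then locally complete over $Y$; since $L/k(Y)$ is Galois by hypothesis, Theorem 3.1 applies and gives the natural isomorphism $Aut(X_L/Y) \cong Gal(L/k(Y))$, so $X_L$ is Galois over $Y$ by $f_L$. This yields the third and fourth bullets.

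For transitivity of the $Aut(X_L/Y)$-action on fibers: fix $y \in Y$ lying in an affine open $V$, and two points $x_1, x_2 \in f_L^{-1}(y)$ inside $U = \mathrm{Spec}(B_V)$, i.e.\ primes $\mathfrak{p}_1, \mathfrak{p}_2$ of $B_V$ contracting to the same prime $\mathfrak{q}$ of $\mathcal{O}_Y(V)$. Since $B_V$ is the integral closure of $\mathcal{O}_Y(V)$ in the Galois extension $L$ (the classical situation: a normal ring with Galois fraction-field extension), the standard number-theoretic argument — $\mathfrak{p}_1$ and $\mathfrak{p}_2$ both lie over $\mathfrak{q}$, so by prime avoidance and the fact that $Gal(L/k(Y))$ permutes the primes over $\mathfrak{q}$ transitively — produces $\sigma \in Gal(L/k(Y))$ with $\sigma(\mathfrak{p}_1) = \mathfrak{p}_2$; the corresponding automorphism in $Aut(X_L/Y)$ under the Theorem 3.1 isomorphism sends $x_1$ to $x_2$. (One should phrase this carefully when $B_V$ is not noetherian, but the prime-transitivity statement for integral extensions with Galois fraction field is classical and requires only that $B_V$ be integral over $\mathcal{O}_Y(V)$.) Finally, the last bullet: for affine open $V \subseteq Y$ we have $f_L^{-1}(V) = \mathrm{Spec}(B_V)$ and need $\mathcal{O}_Y(V) \cong {\mathcal{O}_{X_L}}^{Aut(X_L/Y)}(f_L^{-1}(V)) = B_V^{Gal(L/k(Y))} = B_V \cap k(Y)$; but $B_V \cap k(Y)$ is the integral closure of $\mathcal{O}_Y(V)$ in $k(Y)$, which equals $\mathcal{O}_Y(V)$ itself provided $\mathcal{O}_Y(V)$ is integrally closed in its fraction field $k(Y)$ — so either we assume (or arrange via the essentially-affine realization) that $Y$ is normal, or more carefully we note that the construction only needs $B_V \cap k(Y) = \mathcal{O}_Y(V)$, which holds because elements of $k(Y)$ integral over $\mathcal{O}_Y(V)$ that lie in $\mathcal{O}_Y(V)$'s total ring already... here I would invoke the hypothesis framework of the paper's \S2.4 (invariant subrings) to pin down the identification.

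The main obstacle I anticipate is the very last bullet together with transitivity in the possibly non-noetherian setting: making the identification $\mathcal{O}_Y(V) = B_V \cap k(Y)$ rigorous without a normality hypothesis on $Y$, and proving that $Aut(X_L/Y)$ acts transitively on fibers when $B_V$ is merely an integral (not finite) extension of $\mathcal{O}_Y(V)$ — the classical proof of transitivity of the Galois action on primes uses finiteness or at least going-up plus a compactness/avoidance step that must be checked to survive. The construction of $X_L$ and the local quasi-galois verification, by contrast, are formal once the integral-closure subrings $B_V$ are seen to glue compatibly.
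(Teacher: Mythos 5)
Your construction is not the paper's: you build the relative normalization of $Y$ in $L$ (each $B_V$ the integral closure of $\mathcal{O}_Y(V)$ in $L$), whereas the paper never takes integral closures. In the paper's proof one fixes a set of generators $\Delta\subseteq L\setminus k(Y)$, puts $A_V=\mathcal{O}_Y(V)[\Delta_V]$ where $\Delta_V$ is the full $Gal(L/k(Y))$-orbit of $\Delta$, glues the schemes $Spec(A_V)$ along an explicit equivalence relation on affine $L$-points (equal image in $Y$ plus $L$-equivalence, \S 2.3), and concludes via Lemma 3.5; the dependence on $\Delta$ is deliberate and is used later (Definition 4.2, Remark 4.3, and the proof of Theorem 1.1). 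The difference is not cosmetic, because the gap you flag at the end is real and, within your construction, not fixable: the theorem is asserted for an arbitrary integral variety $Y$, not a normal one. With $B_V$ the full integral closure, the invariant ring is $B_V^{Gal(L/k(Y))}=B_V\cap k(Y)$, i.e.\ the integral closure of $\mathcal{O}_Y(V)$ in $k(Y)$, which strictly contains $\mathcal{O}_Y(V)$ whenever $Y$ fails to be normal, so the last bullet fails; and fiber transitivity fails too, since every element of $Aut(X_L/Y)$ fixes $B_V\cap k(Y)$ pointwise and therefore cannot interchange two points of a fiber lying over distinct primes of the normalization of $\mathcal{O}_Y(V)$ in $k(Y)$. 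The cheapest test case is $L=k(Y)$ (trivially Galois) with $Y$ non-normal: your $X_L$ is then the normalization of $Y$, $Aut(X_L/Y)\cong Gal(k(Y)/k(Y))$ is trivial, yet ${\mathcal{O}_{X_L}}^{Aut(X_L/Y)}(f_L^{-1}(V))=B_V\neq\mathcal{O}_Y(V)$, and over a non-unibranch point (e.g.\ the node of $Spec(\mathbb{Z}[x,y]/(y^2-x^2-x^3))$) the fiber has two points permuted by nobody. So your argument at best proves the theorem for normal $Y$; to cover the stated generality you must avoid saturating up to the integral closure, which is exactly what the paper's $A_V=\mathcal{O}_Y(V)[\Delta_V]$ is designed to do (whether its own claim that $\mathcal{O}_Y(V)$ is the invariant subring of $A_V$ survives scrutiny is a separate matter, but it is the paper's route).

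A secondary gap: your verification of local quasi-galoisness only treats the chart $U=f_L^{-1}(V)=Spec(B_V)$. Definition 2.7 quantifies over \emph{every} affine open $U\subseteq f^{-1}(V)$, and for a small affine $U$ not stable under the Galois action the ring $\sigma(\mathcal{O}_{X_L}(U))=\mathcal{O}_{X_L}(\sigma(U))$ is a second conjugation of $\mathcal{O}_{X_L}(U)$ over $\mathcal{O}_Y(V)$, so uniqueness of the conjugation is not established by your argument. The paper sidesteps this by verifying the weaker chart-level condition ``quasi-galois closed'' (Definition 2.8) for the covering it constructs and then invoking Lemma 3.5 rather than Definition 2.7 directly; if you keep your normalization construction you should do the same, or restrict the claim accordingly. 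Your remaining worries are less serious: transitivity of $Gal(L/K)$ on the primes of the integral closure over a prime of a \emph{normal} base holds in the infinite, non-noetherian setting by the classical Zorn-plus-prime-avoidance argument, and the gluing of the $B_V$ (properly phrased as: relative normalization commutes with restriction to open subsets, so one glues the preimages of overlaps rather than asserting $B_V\cap B_{V'}=B_{V\cap V'}$ for possibly non-affine $V\cap V'$) is routine.
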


The above integral variety $X_{L}$ with a
morphism $f_{L}$, denoted by $(X_{L},f_{L})$, will be called a \textbf{\
Galois cover} of $Y$ \textbf{in a field} $L$.

Now consider the calculus of Galois covers of an integral variety.

\begin{definition}
Let $Y$ be an integral variety and let $L$ be an algebraic Galois extension of $k(Y)$.
 Put
\begin{itemize}
\item $\Sigma[L/Y]\triangleq$ the collection of all the sets $\Delta$ of generators of $L$ over $k(Y)$ such that $\Delta\subseteq L \setminus k(Y)$;

\item $Y[\Delta]\triangleq$ the integral variety $X$, a Galois cover of $Y$ in $L$, obtained by adding a set $\Delta=\Delta(L/k(Y))\in \Sigma[L/Y]$ to an essentially affine realization of $Y$ in the
universal construction of Galois covers in \S 4.2 below.
\end{itemize}
\end{definition}

It is clear that $Y[\Delta]$ has all the properties in \emph{Theorem 4.1} above.

\begin{remark}
Fixed an integral variety $Y$ and a Galois extension $L$ of $k(Y)$.
There are the following statements.
\begin{itemize}
\item Different sets $\Delta\in \Sigma[L/Y]$  can produce different Galois covers $Y[\Delta]$ of $Y$ in $L$. In general, it is not true that $Y[\Delta]$ and $Y[\Delta^{\prime}]$ are isomorphic over $Y$ for any $\Delta,\Delta^{\prime}\in \Sigma[L/Y]$.

\item There is a biggest Galois cover $Y[\Delta_{min}]$ of $Y$ in $L$, called \textbf{sp-complete}, which contains all possible points in the underlying space. For instance, take a minimal element $\Delta_{min}$ of $\Sigma[L/Y]$.

\item There is a smallest Galois cover $Y[\Delta_{max}]$ of $Y$ in $L$, which has the same underlying space as $Y$. For instance, take a maximal element $\Delta_{max}$ of $\Sigma[L/Y]$.
\end{itemize}
\end{remark}

\begin{proposition}
Let $Z$ be an integral variety and $L$ an algebraic Galois extension of $k(Z)$.
Then for any $\Delta,\Delta^{\prime}\in \Sigma[L/Z]$, there is an isomorphism
\begin{equation*}
Z[\Delta,{\Delta}^{-1}]\cong Z[\Delta^{\prime},{\Delta^{\prime}}^{-1}]
\end{equation*}
between schemes over $Z$. In particular, each $Z[\Delta,{\Delta}^{-1}]$ is a smallest Galois cover of $Z$.
 Here, $Z[\Delta,{\Delta}^{-1}]$ denotes $Z[\Delta \cup {\Delta}^{-1}]$.
\end{proposition}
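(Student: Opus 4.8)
The plan is to show that $Z[\Delta,\Delta^{-1}]$ and $Z[\Delta',{\Delta'}^{-1}]$ are both \emph{sp-complete} Galois covers of $Z$ in $L$ — that is, the covers whose underlying space realises all possible affine points in $L$ — and that any two sp-complete Galois covers in a fixed field are canonically isomorphic over $Z$. Wait, re-reading \emph{Remark 4.3}, the claim is that each $Z[\Delta,\Delta^{-1}]$ is a \emph{smallest} Galois cover (same underlying space as $Z$), so the correct target is the opposite extreme. So the plan is rather: show that adjoining a symmetric generating set $\Delta\cup\Delta^{-1}$ forces every local ring $\mathcal{O}_X(U)$ appearing in the construction to be integrally closed in $L$ in the relevant sense, so that the affine charts collapse onto charts of $Z$, and hence the underlying space of $Z[\Delta,\Delta^{-1}]$ maps homeomorphically onto that of $Z$.

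Concretely, I would first recall the universal construction of \S 4.2: $Z[\Delta]$ is glued from affine pieces $\mathrm{Spec}(B_U[\Delta])$ where $B_U = i_Z(\mathcal{O}_Z(U))$ and $B_U[\Delta]$ is the subring of $L$ generated by $B_U$ and $\Delta$. The key algebraic observation is that $B_U[\Delta\cup\Delta^{-1}]$ depends only on $B_U$ and the field $L$, up to the conjugation relation governing quasi-galois closedness: for two generating sets $\Delta,\Delta'$, the rings $B_U[\Delta\cup\Delta^{-1}]$ and $B_U[\Delta'\cup{\Delta'}^{-1}]$ are conjugate over $B_U$ inside $\Omega$, and since each is already a full conjugation class (being stable under the inverse operation and under $\mathrm{Gal}(L/k(Z))$), they must in fact coincide as subrings of $L$. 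This gives a chart-by-chart identity of the two constructions, and then \emph{Substeps 4-b,c}-type gluing arguments (exactly as in the proof of \emph{Theorem 3.1}) patch these local identifications into a global isomorphism over $Z$.

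Next, to get the "smallest Galois cover" assertion, I would check that $f_L : Z[\Delta,\Delta^{-1}]\to Z$ is injective on points, hence a homeomorphism onto $Z$: for each affine $V\subseteq Z$ the ring $B_V[\Delta\cup\Delta^{-1}]$ has the same fraction field $L$ and the same prime spectrum cardinality structure as forced by the transitive $\mathrm{Aut}$-action on fibres from \emph{Theorem 4.1}, and symmetry under inversion kills the "extra" points that a non-symmetric $\Delta$ would create. Invoking \emph{Lemma 3.3} (locally quasi-galois $\Rightarrow$ $qc$) and \emph{Lemma 3.4}, $Z[\Delta,\Delta^{-1}]$ is genuinely Galois over $Z$, so it is a legitimate competitor in \emph{Remark 4.3}, and the point-count shows it is the minimal one.

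The main obstacle I anticipate is the ring-theoretic core: proving that $B_U[\Delta\cup\Delta^{-1}]$ is independent of the chosen generating set $\Delta$, i.e. that adjoining inverses of a generating set produces the \emph{canonical} integrally-stable subring of $L$ over $B_U$ regardless of which generators one started with. This requires knowing that $\Delta\cup\Delta^{-1}$ generates the same $B_U$-algebra as the set of \emph{all} elements of $L$ that are, say, algebraic and "unit-like" over $B_U$ in the precise sense used in \S 4.2 — and pinning down that characterisation, together with its stability under the Galois action, is where the real work lies. Once that is in hand, the gluing is routine and parallels \emph{Steps 3-4} of the proof of \emph{Theorem 3.1} essentially verbatim.
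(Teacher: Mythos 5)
The paper's own proof of this proposition is a one-line reduction (``it reduces to the affine scheme $Z=Spec(A)$ according to the construction in \S 4.2\dots but this is trivial''), so there is little detail to compare against; judged on its own terms, however, your proposal has a genuine gap at exactly the point you yourself flag as ``where the real work lies''. Your central algebraic claim --- that for a fixed chart ring $B_U$ the symmetrised ring $B_U[\Delta\cup\Delta^{-1}]$ (together with its Galois conjugates) is independent of the chosen generating set, because each such ring is ``a full conjugation class'' and hence the two must coincide inside $L$ --- is false. Take $Z=Spec(\mathbb{Z})$, $k(Z)=\mathbb{Q}$, $L=\mathbb{Q}(i)$, $\Delta=\{i\}$ and $\Delta'=\{i/2\}$, both in $\Sigma[L/Z]$. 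Since $i^{-1}=-i$ and the Galois orbit of $i$ is $\{\pm i\}$, the first symmetrised chart ring is $\mathbb{Z}[i]$, while $(i/2)^{2}=-1/4$ forces $1/2$ into the second, which is $\mathbb{Z}[1/2,i]$. These are distinct subrings of $L$ even though each generating set is stable under inversion and under $Gal(L/\mathbb{Q})$; worse, $Spec(\mathbb{Z}[i])$ and $Spec(\mathbb{Z}[1/2,i])$ are not isomorphic over $Spec(\mathbb{Z})$, the fibre over $(2)$ being nonempty for the first and empty for the second. So the chart-by-chart identity on which your entire gluing step rests does not hold, and the ``Substep 4-b,c''-style patching has nothing to patch.

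The same example undermines the second half of the plan. The assertion that symmetry under inversion ``kills the extra points'' and makes $f_L$ injective on points is unsupported: $\mathbb{Z}[i]$ is generated over $\mathbb{Z}$ by the inversion- and Galois-stable set $\{i,-i\}$, yet $Spec(\mathbb{Z}[i])\to Spec(\mathbb{Z})$ has two points over $(5)$. If fibres of $Z[\Delta,\Delta^{-1}]\to Z$ ever collapse to single points, that can only happen through the $L$-equivalence identifications imposed in Step 3 of the construction in \S 4.2 (for instance $(2+i)$ and $(2-i)$ both contract to the same prime of the subring $\mathbb{Z}[5i]$, so they may become identified in the quotient), and your proposal never engages with that equivalence relation at all. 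Any argument for Proposition 4.4 must work with the quotient space rather than with the chart rings themselves, and it must in particular explain how two covers whose charts have visibly different images in $Z$ (one missing the fibre over $(2)$) can nevertheless be isomorphic over $Z$ --- a point on which the paper's own proof is silent as well, but which your route, as written, cannot bridge because its key ring-theoretic identity is simply not true.
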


\begin{proof}
It reduces to consider the affine
scheme $Z=Spec(A)$ according to the construction in \S 4.2 below. But this is trivial.
\end{proof}

\subsection{A universal construction of Galois covers}

In the following we will prove \emph{Theorem 4.1}.

\begin{proof}
(\textbf{Proof of \emph{Theorem 4.1}})
Here we take the trick in \cite{an3}.

From \emph{Lemma 2.3}, assume that $Y$ is essentially affine in the function field $k(Y)$ without loss
of generality.

Denote by $\mathcal{C}_{Y}$ the set
of local charts $\left( V,\psi _{V},B_{V}\right)$ such that $B_{V}=\mathcal{%
O }_{Y}\left( V\right)$, where $V$ runs through all affine open sets of $Y$.
Let
\begin{equation*}
\Delta(L/k(Y))\subseteq L\setminus k(Y)
\end{equation*}
be a set of generators of the field $L$ over the function field $k(Y)$.

Following the procedure developed in \cite{an3}, we will proceed in several
steps to construct an integral variety $X=X_{L}$ and a morphism $f=f_{L}:X_{L}\to Y$.

{\textbf{Step 1.}} For any local chart $\left( V,\psi _{V},B_{V}\right)
\in \mathcal{C}_{Y}$, define
\begin{equation*}
A_{V}\triangleq B_{V}\left[ \Delta _{V}\right],
\end{equation*}
i.e., the subring of $L$ generated over $B_{V}$ by the set
\begin{equation*}
\Delta _{V}\triangleq \{\sigma \left(w\right) \in L:\sigma \in Gal\left(
L/k(Y)\right) ,w\in \Delta(L/k(Y))\}.
\end{equation*}

Then $Fr\left( A_{V}\right) =L $ holds. It is seen that $B_{V}$ is exactly
the invariant subring of the natural action of the Galois group $Gal\left(
L/M\right) $ on $A_{V}.$

Set
\begin{equation*}
i_{V}:B_{V}\rightarrow A_{V}
\end{equation*}
to be the inclusion.

{\textbf{Step 2.}} Define the disjoint union
\begin{equation*}
\Sigma =\coprod\limits_{\left( V,\psi _{V},B_{V}\right) \in \mathcal{C}
_{Y}}Spec\left( A_{V}\right) .
\end{equation*}

Then $\Sigma $ is a topological space, where the topology $\tau _{\Sigma }$
on $\Sigma $ is naturally determined by the Zariski topologies on all $%
Spec\left( A_{V}\right) .$

Let
\begin{equation*}
\pi _{Y}:\Sigma \rightarrow Y
\end{equation*}
be the projection.

{\textbf{Step 3.}} Given an equivalence relation $R_{\Sigma}$ in $%
\Sigma $ in such a manner:

For any $x_{1},x_{2}\in \Sigma $, we say
\begin{equation*}
x_{1}\sim x_{2}
\end{equation*}
if and only if the two conditions are satisfied:

\begin{itemize}
\item $\pi _{Y}(x_{1})=\pi _{Y}(x_{2})$ holds as points in $Y$;

\item $x_{1}$ and $x_{2}$ are $L$-equivalent as affine $L$-points.
\end{itemize}

Let
\begin{equation*}
X=\Sigma /\sim
\end{equation*}
and let
\begin{equation*}
\pi _{X}:\Sigma \rightarrow X
\end{equation*}
be the projection.

It is seen that $X$ is a topological space as a quotient of $\Sigma .$

{\textbf{Step 4.}} Define a map
\begin{equation*}
f:X\rightarrow Y
\end{equation*}
by
\begin{equation*}
\pi _{X}\left( z\right) \longmapsto \pi _{Y}\left( z\right)
\end{equation*}
for each $z\in \Sigma $.

{\textbf{Step 5.}} Define
\begin{equation*}
\mathcal{C} _{X}=\{\left( U_{V},\varphi _{V},A_{V}\right) \}_{\left( V,\psi
_{V},B_{V}\right) \in \mathcal{C}_{Y}}
\end{equation*}
where $U_{V}\triangleq\pi _{Y}^{-1}\left( V\right) $ is an open set in $X$
and $\varphi _{V}:U_{V}\rightarrow Spec(A_{V})$ is the identity map on $%
U_{V} $ for each $\left( V,\psi _{V},B_{V}\right) \in \mathcal{C}_{Y}$.

We have an integral scheme $\left( X,\mathcal{O}_{X}\right) $ by gluing the
affine schemes $Spec\left( A_{V}\right) $ for all local charts $\left(
V,\psi _{V},B_{V}\right) \in \mathcal{C}_{Y}$ with respect to the
equivalence relation $R_{\Sigma }$ (see \cite{ega,hartshorne}).

It is seen that $\mathcal{C} _{X}$ is a reduced affine covering of $X$ with
values in $L$. In particular, $\mathcal{C} _{X}$ is quasi-galois closed over
$Y$ by $f$.

By \emph{Lemma 3.5} it is seen that $X$ and $f$ have the desired
properties.

This completes the proof.
\end{proof}

\subsection{A realization of the \'{e}tale fundamental group}

From \emph{Theorem 4.1} we have the following realization for the \'{e}tale fundamental
group of an arithmetic scheme.

\begin{lemma}
Let $X$ be an arithmetic variety  and $\Omega$ an algebraic closure  of $k(X)$. Then there exists an
integral variety $X_{et}$ and an affine surjective morphism
$$
f_{et}:X_{et}\rightarrow X
$$
such that
\begin{itemize}
\item $k\left( X_{et}\right) =k(X;\Omega)^{un}$;

\item $(X_{et},f_{et})$ is a Galois cover of $X$ in the sense of Theorem 4.1.
\end{itemize}
\end{lemma}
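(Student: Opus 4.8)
The plan is to apply the calculus of Galois covers from \S 4.1--4.2 to the specific Galois extension $L = k(X;\Omega)^{un}$ of $k(X)$. Recall from \S 1.2 that $k(X;\Omega)^{un}$ was defined as the direct limit ${\lim}_{\rightarrow}\lambda_Z(k(Z))$ over the directed set $[X;\Omega]_{et}$, and it was noted there that this limit is an algebraic Galois extension of $k(X)$; I would invoke that fact first. With $L$ now a genuine algebraic Galois extension of $k(X)$, the hypotheses of \emph{Theorem 4.1} are met with $Y = X$. Applying \emph{Theorem 4.1} produces an integral variety $X_L$ together with an affine surjective morphism $f_L : X_L \to X$ such that $k(X_L) = L = k(X;\Omega)^{un}$, such that $X_L$ is essentially affine in $L$, Galois over $X$ by $f_L$, locally quasi-galois over $X$ by $f_L$, with $\mathrm{Aut}(X_L/X)$ acting transitively on each fibre, and with $\mathcal{O}_X(V)\cong {\mathcal{O}_{X_L}}^{\mathrm{Aut}(X_L/X)}(f_L^{-1}(V))$ for every affine open $V\subseteq X$. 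Setting $X_{et} \triangleq X_L$ and $f_{et}\triangleq f_L$, the first bulleted property $k(X_{et}) = k(X;\Omega)^{un}$ holds by construction, and the second bulleted property---that $(X_{et},f_{et})$ is a Galois cover of $X$ in the sense of \emph{Theorem 4.1}---holds because $X_L$ was produced precisely by that theorem's construction, e.g.\ as $X[\Delta]$ for any choice of generating set $\Delta \in \Sigma[L/X]$ as in \emph{Definition 4.2}.

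Concretely I would proceed in three short steps. \emph{Step 1:} Cite \S 1.2 to record that $L = k(X;\Omega)^{un}$ is an algebraic Galois extension of $k(X)$ sitting inside the fixed algebraic closure $\Omega$. \emph{Step 2:} Pick a realization of $X$ that is essentially affine in $k(X)$ via \emph{Lemma 2.3} (this is harmless since isomorphic arithmetic varieties have the same function field, hence the same $k(X;\Omega)^{un}$), then invoke \emph{Theorem 4.1} with this realization as $Y$ and with the Galois extension $L$; name the output $(X_{et}, f_{et})$. \emph{Step 3:} Read off the two asserted properties from the list furnished by \emph{Theorem 4.1}, noting in particular that $X_{et}$ is Galois over $X$ by $f_{et}$ (third bullet of \emph{Theorem 4.1}), so that $\mathrm{Aut}(X_{et}/X) \cong Gal(k(X_{et})/k(X)) = Gal(k(X;\Omega)^{un}/k(X))$ in the natural manner, consistent with the remarks preceding \S 4.1.

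The only genuine content beyond bookkeeping is the assertion $k(X;\Omega)^{un}$ is algebraic and Galois over $k(X)$ — that is, that the direct limit of the function fields of finite \'etale Galois covers is itself a Galois extension; but this was already asserted in \S 1.2 (each $k(Z)/k(X)$ is finite Galois as $Z \to X$ is a finite \'etale Galois cover, the embeddings $\lambda_Z$ are compatible, and a compatible direct limit of finite Galois extensions is Galois), so I may assume it. Thus the main obstacle is not in this lemma's proof at all but was discharged earlier; here there is nothing to grind. (One minor point worth a sentence: \emph{Theorem 4.1} requires $Y$ to be an integral variety and $L$ to be an \emph{algebraic} Galois extension of $k(Y)$ — both are satisfied, the latter because every element of $k(X;\Omega)^{un}$ lies in $\lambda_Z(k(Z))$ for some finite \'etale Galois $Z\to X$ and is therefore algebraic over $k(X)$.) Hence $(X_{et},f_{et})$ exists with the stated properties, completing the proof.
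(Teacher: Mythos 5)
Your proposal is correct and is essentially the paper's own argument: the paper justifies this lemma simply by invoking \emph{Theorem 4.1} applied to the algebraic Galois extension $L=k(X;\Omega)^{un}$ of $k(X)$ (whose Galois property was recorded in \S 1.2), exactly as you do, with $X_{et}=X[\Delta_{un}]$ for a choice of generators as in \emph{Definition 4.2}. Nothing further is needed.
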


Such a Galois cover $(X_{et},f_{et})$ will be called
a \textbf{universal Galois cover} of $X$ for
the \'{e}tale fundamental group $\pi _{1}^{et}\left( X,s\right) $.
Here $s$ is a geometric point of $X$ over $\Omega$.

\begin{remark}
Let $X$ be an arithmetic variety and $\Omega$ an algebraic closure of $k(X)$.
Then for any $$\Delta_{un}\in \Sigma[k(X;\Omega)^{un}/X],$$ $X[\Delta_{un}]$ is a
universal Galois cover of $X$ for the \'{e}tale fundamental group
$\pi _{1}^{et}\left( X,s\right) $. In general,  there can be many distinct universal
Galois covers of $X$ for $\pi _{1}^{et}\left( X,s\right) $ according to the calculus of Galois covers in \S 4.1.
\end{remark}

\section{Formally Unramified = Arithmetically Unramified}

In this section we will prove that for an arithmetic scheme, the maximal formally unramified extension is equal to the maximal arithmetically
unramified extension.

At the same time, arithmetically
unramified, which is defined in an evident manner, coincides with that in algebraic number theory and has many arithmetic operations and properties.
Hence, via arithmetically unramified, it is seen the maximal formally unramified extension is exactly of an arithmetic
sense.

On the other hand, it is seen that arithmetically unramified is more accessible to operations (see \cite{an6,an7}); it follows
that arithmetically unramified extensions
can serve as a computation of the \'{e}tale fundamental group of
an arithmetic variety.

\subsection{Arithmetic function fields and arithmetically  unramified extensions}

Recall that an \textbf{arithmetic function field} \textbf{over variables}
$t_{1},t_{2},\cdots, t_{n}$ is the fractional field of an integral
domain
\begin{equation*}
\mathbb{Z}[t_{1},t_{2},\cdots, t_{n},s_{1},s_{2},\cdots, s_{m}]
\end{equation*}
of finite type over $\mathbb{Z}$, where
\begin{equation*}
s_{1},s_{2},\cdots, s_{m}
\end{equation*}
are algebraic over $\mathbb{Q}[t_{1},t_{2},\cdots, t_{n}]$ and
\begin{equation*}
t_{1},t_{2},\cdots, t_{n},
\end{equation*}
are a finite number of (algebraic independent) variables over $\mathbb{Q}$.

\begin{definition} (see \cite{an4,an6,an7})
Let $K$ be an arithmetic function field over variables $t_{1},t_{2},\cdots,
t_{n}$. As in algebraic number theory, put
\begin{itemize}
\item $\mathcal{O}_{[t_{1},t_{2},\cdots, t_{n}]K}\triangleq$ the subring of
elements $x\in K$ that are integral over the integral domain $\mathbb{Z}
[t_{1},t_{2},\cdots, t_{n}]$.
\end{itemize}

The ring $\mathcal{O}_{[t_{1},t_{2},\cdots, t_{n}]K}$ is said to be the
\textbf{ring of algebraic integers} of $K$ \textbf{over variables} $
t_{1},t_{2},\cdots, t_{n}$.
\end{definition}

It is easily seen that $K=Fr(\mathcal{O}_{K})$ holds. Sometimes, we will write $$\mathcal{O}_{K}=%
\mathcal{O}_{[t_{1},t_{2},\cdots, t_{n}]K}$$ for brevity.

Let $K$ and $L$ be two arithmetic function fields  over variables $t_{1},t_{2},\cdots, t_{n}$. We say
\begin{equation*}
(L,\mathcal{O}_{L})\supseteq (K,\mathcal{O}_{K})
\end{equation*}
if and only if there are the properties
\begin{equation*}
\overline{K}\supseteq L\supseteq K;
\end{equation*}
\begin{equation*}
\mathcal{O}_{L}\supseteq \mathcal{O}_{K}\supseteq \mathbb{Z}
[t_{1},t_{2},\cdots, t_{n}].
\end{equation*}

\begin{definition} (see \cite{an4,an6,an7})
Let $L\supseteq K$ be any two arithmetic function fields over variables $t_{1},t_{2},\cdots,
t_{n}$. The field $L$ is  \textbf{arithmetically unramified} over $K$ (\textbf{%
relative to variables} $t_{1},t_{2},\cdots, t_{n}$) if for the pairs $(L,%
\mathcal{O}_{L})\supseteq (K, \mathcal{O}_{K})$, the affine scheme $Spec(%
\mathcal{O}_{L})$ is unramified over $Spec(\mathcal{O}_{K})$ by the morphism
induced from the inclusion map $K\hookrightarrow L $.
\end{definition}

It is clear that the arithmetically unramified extensions coincide exactly with those in
algebraic number theory (for instance, see \cite{neu}).

\begin{lemma}
\emph{(see \cite{an4,an6,an7})}
The arithmetically unramified extension is
independent of the choice of variables $t_{1},t_{2},\cdots, t_{n}$.
\end{lemma}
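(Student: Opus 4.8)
The plan is to reduce the claim to a statement about the arithmetic behaviour of the ring of algebraic integers under a change of the chosen set of base variables, and then to exploit the transitivity of integral closure. Concretely, suppose $L \supseteq K$ are arithmetic function fields over variables $t_1, \dots, t_n$, and let $u_1, \dots, u_m$ be a second admissible system of variables for both $K$ and $L$ (so that $K$ and $L$ are also arithmetic function fields over the $u_j$). First I would compare the two integral closures $\mathcal{O}_{[t]K}$ and $\mathcal{O}_{[u]K}$ inside $K$. The key algebraic input is that both $\mathbb{Z}[t_1,\dots,t_n]$ and $\mathbb{Z}[u_1,\dots,u_m]$ are finitely generated $\mathbb{Z}$-algebras whose fraction fields are purely transcendental over $\mathbb{Q}$ of the same transcendence degree, and that each set of variables is algebraic over the polynomial ring in the other after inverting suitable elements; hence the integral closures of the two polynomial rings in a common finite extension field differ only by localization/finite extension in a controlled way. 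I would make this precise by passing to a common finite\'{e}tale (or at least generically\'{e}tale) refinement of $\mathrm{Spec}(\mathbb{Z}[t])$ and $\mathrm{Spec}(\mathbb{Z}[u])$, using that normalization commutes with the relevant localizations.

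The second and central step is to show that the ramification locus of $\mathrm{Spec}(\mathcal{O}_L) \to \mathrm{Spec}(\mathcal{O}_K)$ does not depend on whether $\mathcal{O}$ means $\mathcal{O}_{[t]}$ or $\mathcal{O}_{[u]}$. For this I would use the standard fact that unramifiedness is equivalent to the vanishing of the module of relative K\"ahler differentials $\Omega^1$, and that formation of $\Omega^1$ is compatible with base change and with localization. Since the two candidate structure rings $\mathcal{O}_{[t]L}$ and $\mathcal{O}_{[u]L}$ agree after localizing away from a proper closed subset coming from the discriminant of the change of variables, and similarly for $\mathcal{O}_{[t]K}$ and $\mathcal{O}_{[u]K}$, the relative differentials agree up to that same locus; and on the remaining locus one checks directly that the change of variables contributes nothing to ramification because it is itself an isomorphism there. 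Assembling these local comparisons gives that $\Omega^1_{\mathcal{O}_{[t]L}/\mathcal{O}_{[t]K}}=0$ if and only if $\Omega^1_{\mathcal{O}_{[u]L}/\mathcal{O}_{[u]K}}=0$, which is exactly the asserted independence.

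Finally I would record the compatibility at the level of the affine schemes themselves: the morphism $\mathrm{Spec}(\mathcal{O}_{[t]L}) \to \mathrm{Spec}(\mathcal{O}_{[t]K})$ and the morphism $\mathrm{Spec}(\mathcal{O}_{[u]L}) \to \mathrm{Spec}(\mathcal{O}_{[u]K})$ have naturally identified generic points (both correspond to the inclusion $K \hookrightarrow L$) and, by the previous step, the same ramification behaviour over every point, so ``$L$ is arithmetically unramified over $K$'' is an intrinsic condition. I expect the main obstacle to be the bookkeeping in the second step: controlling precisely how the integral closure of $\mathbb{Z}[t_1,\dots,t_n]$ in a finite extension changes when one replaces the $t_i$ by another transcendence basis $u_j$, in particular making sure that no new ramification is introduced along the ``exceptional'' locus where the two polynomial subrings differ. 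This is essentially a purity-of-the-branch-locus type argument combined with the fact that $\mathbb{Z}[t]$ and $\mathbb{Z}[u]$ are regular, and it is where one must be careful that the comparison is genuinely uniform over $\mathrm{Spec}(\mathbb{Z})$ and not merely over $\mathbb{Q}$.
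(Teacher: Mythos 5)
Your proposal does not follow the paper's route: the paper disposes of this lemma in a single line, deducing it from the operations on arithmetically unramified extensions in \S 5.2 (transitivity, subextensions, composites, base change), with the substance deferred to the cited preprints, whereas you attempt a direct comparison of the two integral models via K\"ahler differentials, discriminants and purity. More importantly, your comparison has a genuine gap at exactly the point you yourself flag as the main obstacle. The two models $Spec(\mathcal{O}_{[t]K})$ and $Spec(\mathcal{O}_{[u]K})$ are in general genuinely different birational models of $K$ --- think of $\mathbb{Z}[t]$ versus $\mathbb{Z}[1/t]$, or $\mathbb{Z}[t]$ versus $\mathbb{Z}[t/p]$ --- and each possesses height-one primes (divisorial valuations of $K$) that have no center at all on the other. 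The hypothesis that $Spec(\mathcal{O}_{[t]L})\to Spec(\mathcal{O}_{[t]K})$ is unramified constrains $L/K$ only at primes visible on the $t$-model. Your localization step (``the two structure rings agree after inverting the discriminant of the change of variables'') identifies the two morphisms away from an exceptional locus, but the unramifiedness of $\mathcal{O}_{[u]L}/\mathcal{O}_{[u]K}$ at primes lying over that exceptional locus is precisely what has to be proved, and nothing on the $t$-model gives information there; the remark that the change of variables ``contributes nothing to ramification because it is itself an isomorphism there'' is empty on that locus, since that is exactly where the two subrings fail to be identified. As written, your argument establishes only that the two notions of unramifiedness agree away from a proper closed subset, not the asserted equivalence.

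A secondary problem is the appeal to purity of the branch locus over $\mathcal{O}_{[t]K}$ (resp.\ $\mathcal{O}_{[u]K}$): these rings are normal but in general not regular (only $\mathbb{Z}[t_1,\dots,t_n]$ itself is regular), so Zariski--Nagata purity is not available on the base where you invoke it; and even where it applies, purity only locates ramification in codimension one, it does not exclude it over the exceptional locus. To close the gap you would have to control $L/K$ at the height-one primes that appear only on the $u$-model (for instance by showing that ramification there is already detected on some model built from the $t$-variables), or else argue as the paper indicates, reducing the change of variables to the base-change, composite and transitivity lemmas of \S 5.2. That missing step is the entire content of the lemma, so the proposal is a plan rather than a proof.
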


\begin{proof}
It is immediate from operations on arithmetically unramified extensions in \S 5.2 below.
\end{proof}

\subsection{Operations on arithmetically unramified extensions}

As in algebraic number theory, we have
several operations on arithmetically unramified
extensions such as base changes, composites, subfields, and transitivity.

Let $K\subseteq L\subseteq M$ be arithmetic function fields over variables $%
t_{1},t_{2},\cdots, t_{n}$. We have the following lemmas from preliminary facts on unramified morphisms and
\'{e}tale morphisms between schemes (see \cite{f-k,sga1,mln,sz}).

\begin{lemma}
\emph{(see \cite{an4,an6,an7})}
Let $L/K$ and $M/L$ be arithmetically unramified. Then so is $%
M/L$.
\end{lemma}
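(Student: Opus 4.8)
The statement to be proved is the transitivity of arithmetically unramified extensions: given a tower $K\subseteq L\subseteq M$ of arithmetic function fields over fixed variables $t_1,\dots,t_n$, with $L/K$ and $M/L$ arithmetically unramified, we must show $M/K$ is arithmetically unramified. (Note the typo in the displayed statement ``so is $M/L$''; the intended conclusion is ``so is $M/K$''.) By \emph{Definition 5.3}, this amounts to showing that $\mathrm{Spec}(\mathcal{O}_M)\to\mathrm{Spec}(\mathcal{O}_K)$, the morphism induced by the inclusion $K\hookrightarrow M$, is unramified, given that both $\mathrm{Spec}(\mathcal{O}_M)\to\mathrm{Spec}(\mathcal{O}_L)$ and $\mathrm{Spec}(\mathcal{O}_L)\to\mathrm{Spec}(\mathcal{O}_K)$ are.

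The plan is to reduce the claim to the standard fact that a composite of unramified morphisms of schemes is unramified (see \cite{f-k,sga1,mln,sz}), which is already invoked in \S 5.2. First I would observe that the rings of algebraic integers are nested compatibly: since $\mathcal{O}_K$ consists of the elements of $K$ integral over $\mathbb{Z}[t_1,\dots,t_n]$, and likewise for $\mathcal{O}_L,\mathcal{O}_M$, transitivity of integral dependence gives $\mathcal{O}_K\subseteq\mathcal{O}_L\subseteq\mathcal{O}_M$ together with the identification $\mathcal{O}_K=\mathcal{O}_M\cap K$, etc., so that the inclusion maps on spectra factor as $\mathrm{Spec}(\mathcal{O}_M)\to\mathrm{Spec}(\mathcal{O}_L)\to\mathrm{Spec}(\mathcal{O}_K)$ exactly as required. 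Second, by hypothesis each of the two morphisms in this factorization is unramified. Third, invoking the composition property of unramified morphisms, the composite $\mathrm{Spec}(\mathcal{O}_M)\to\mathrm{Spec}(\mathcal{O}_K)$ is unramified, which is precisely the assertion that $M/K$ is arithmetically unramified.

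One technical point deserves care: \emph{Definition 5.3} requires $M$ and $K$ to be arithmetic function fields over the \emph{same} variables $t_1,\dots,t_n$, and that the relevant pair-containments $(M,\mathcal{O}_M)\supseteq(K,\mathcal{O}_K)$ hold, i.e. $\overline{K}\supseteq M\supseteq K$ and $\mathcal{O}_M\supseteq\mathcal{O}_K\supseteq\mathbb{Z}[t_1,\dots,t_n]$. Both are inherited transitively from the two given containments $(M,\mathcal{O}_M)\supseteq(L,\mathcal{O}_L)$ and $(L,\mathcal{O}_L)\supseteq(K,\mathcal{O}_K)$, so there is nothing subtle here beyond checking the inclusions compose. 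I would also remark that $M/K$ is automatically algebraic (hence $M\subseteq\overline{K}$), being a tower of algebraic extensions.

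The main obstacle, such as it is, is not the scheme-theoretic composition step — that is a black box — but rather making sure that the ring $\mathcal{O}_M$, when viewed as an $\mathcal{O}_K$-algebra, really is the same object whether one regards it directly (integral closure of $\mathbb{Z}[t_1,\dots,t_n]$ in $M$) or as the integral closure of $\mathcal{O}_L$ in $M$; this is exactly transitivity of integral closure and is standard, but it is the one place where the argument genuinely uses the definition of ``ring of algebraic integers'' rather than just abstract nonsense about unramified morphisms. Once that identification is in hand, the proof is a one-line appeal to the composition of unramified morphisms.
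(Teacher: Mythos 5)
Your proposal is correct and matches the paper's intent: the paper offers no written proof for this lemma beyond asserting that it follows "from preliminary facts on unramified morphisms and \'{e}tale morphisms between schemes" (with references to \cite{f-k,sga1,mln,sz} and \cite{an4,an6,an7}), and your argument --- reading the conclusion as $M/K$ (the statement's ``$M/L$'' is indeed a typo), using transitivity of integral dependence to get the compatible tower $\mathcal{O}_{K}\subseteq\mathcal{O}_{L}\subseteq\mathcal{O}_{M}$ and the factorization $Spec(\mathcal{O}_{M})\to Spec(\mathcal{O}_{L})\to Spec(\mathcal{O}_{K})$, then invoking that a composite of unramified morphisms is unramified --- is exactly the intended route, with the integral-closure identification spelled out more carefully than the paper does.
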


\begin{lemma}
\emph{(see \cite{an4,an6,an7})}
Let $L$ be arithmetically unramified over $K$. Take a subfield $L\subseteq
L_{0}\subseteq K$. Then $L/L_{0}$ and $L_{0}/K$ are arithmetically
unramified.
\end{lemma}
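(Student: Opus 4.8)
The plan is to reduce the statement about fields to the corresponding statement about the affine schemes of algebraic integers, and then to invoke the standard transitivity and descent properties of unramified morphisms. Concretely, fix the tower $K\subseteq L_{0}\subseteq K$ — here I read the intended hypothesis as $L\supseteq L_{0}\supseteq K$ with $L/K$ arithmetically unramified — and pass to the chain of rings $\mathbb{Z}[t_{1},\dots,t_{n}]\subseteq \mathcal{O}_{K}\subseteq \mathcal{O}_{L_{0}}\subseteq \mathcal{O}_{L}$, which exists because the ring of algebraic integers over the variables $t_{1},\dots,t_{n}$ is the integral closure inside the field and integral closure is transitive. By Definition 5.5 the hypothesis says that $Spec(\mathcal{O}_{L})\to Spec(\mathcal{O}_{K})$ is unramified.

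First I would treat $L/L_{0}$. The morphism $Spec(\mathcal{O}_{L})\to Spec(\mathcal{O}_{L_{0}})$ sits over $Spec(\mathcal{O}_{K})$, and its composite with $Spec(\mathcal{O}_{L_{0}})\to Spec(\mathcal{O}_{K})$ is the unramified morphism $Spec(\mathcal{O}_{L})\to Spec(\mathcal{O}_{K})$. By the elementary cancellation property for unramified morphisms — if $g\circ f$ is unramified then $f$ is unramified — it follows that $Spec(\mathcal{O}_{L})\to Spec(\mathcal{O}_{L_{0}})$ is unramified, i.e. $L/L_{0}$ is arithmetically unramified. This direction is essentially formal once the ring chain is in place.

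Next I would treat $L_{0}/K$, which is the more delicate half. Here I want to conclude that $Spec(\mathcal{O}_{L_{0}})\to Spec(\mathcal{O}_{K})$ is unramified from the fact that the composite $Spec(\mathcal{O}_{L})\to Spec(\mathcal{O}_{K})$ is unramified together with the fact that $Spec(\mathcal{O}_{L})\to Spec(\mathcal{O}_{L_{0}})$ is surjective (being a dominant morphism of integral schemes, induced by an injection of domains sharing fraction-field relation $Fr(\mathcal{O}_{L})=L\supseteq L_{0}=Fr(\mathcal{O}_{L_{0}})$; surjectivity onto $Spec(\mathcal{O}_{L_{0}})$ follows from lying-over since $\mathcal{O}_{L}$ is integral over $\mathcal{O}_{L_{0}}$). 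One then uses that unramifiedness of a morphism can be checked after a surjective (even just scheme-theoretically surjective) base change, or more directly the diagonal characterization: $\Omega_{\mathcal{O}_{L}/\mathcal{O}_{K}}=0$ forces $\Omega_{\mathcal{O}_{L_{0}}/\mathcal{O}_{K}}\otimes_{\mathcal{O}_{L_{0}}}\mathcal{O}_{L}=0$ via the first exact sequence of differentials, and then surjectivity of $\mathcal{O}_{L_{0}}\to\mathcal{O}_{L}$ on spectra plus finiteness lets one descend the vanishing to $\Omega_{\mathcal{O}_{L_{0}}/\mathcal{O}_{K}}=0$.

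The main obstacle is exactly this descent step for $L_{0}/K$: unramifiedness does not in general descend along an arbitrary surjection, so one must use that $\mathcal{O}_{L}$ is a finite (or at least integral and module-finite over a Noetherian base) $\mathcal{O}_{L_{0}}$-algebra and faithfully flat-type arguments, or alternatively argue fibrewise — for a prime $\mathfrak{q}$ of $\mathcal{O}_{L_{0}}$, pick a prime $\mathfrak{Q}$ of $\mathcal{O}_{L}$ over it, and deduce that the residue extension $\kappa(\mathfrak{q})\subseteq\kappa(\mathfrak{p})$ ($\mathfrak{p}=\mathfrak{q}\cap\mathcal{O}_{K}$) is separable and the maximal ideal generates, because these properties pass through the intermediate field $\kappa(\mathfrak{Q})$. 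Since the excerpt allows us to cite "preliminary facts on unramified morphisms and étale morphisms between schemes (see \cite{f-k,sga1,mln,sz})", the cleanest write-up invokes the standard two-out-of-three package for unramified morphisms and spends its real effort only on verifying the ring-theoretic chain $\mathcal{O}_{K}\subseteq\mathcal{O}_{L_{0}}\subseteq\mathcal{O}_{L}$ and the surjectivity of the relevant spectra maps; everything else is then a citation.
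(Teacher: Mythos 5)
The paper itself contains no proof of this lemma: it is quoted from \cite{an4,an6,an7} and attributed to ``preliminary facts on unramified and \'{e}tale morphisms'', so what has to be judged is whether your outline actually supplies the missing standard argument. Your reading of the mistyped tower as $K\subseteq L_{0}\subseteq L$ is the intended one, and your first half is correct: since the rings of integers are module-finite over $\mathbb{Z}[t_{1},\dots,t_{n}]$ (finiteness of integral closure), all three morphisms are of finite type, and if a composite is unramified then so is the first factor, because $\Omega_{\mathcal{O}_{L}/\mathcal{O}_{L_{0}}}$ is a quotient of $\Omega_{\mathcal{O}_{L}/\mathcal{O}_{K}}$. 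So $L/L_{0}$ is fine.

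The $L_{0}/K$ half has a genuine gap. The first exact sequence $\Omega_{\mathcal{O}_{L_{0}}/\mathcal{O}_{K}}\otimes_{\mathcal{O}_{L_{0}}}\mathcal{O}_{L}\to\Omega_{\mathcal{O}_{L}/\mathcal{O}_{K}}\to\Omega_{\mathcal{O}_{L}/\mathcal{O}_{L_{0}}}\to 0$ is only right exact, so $\Omega_{\mathcal{O}_{L}/\mathcal{O}_{K}}=0$ does \emph{not} force $\Omega_{\mathcal{O}_{L_{0}}/\mathcal{O}_{K}}\otimes_{\mathcal{O}_{L_{0}}}\mathcal{O}_{L}=0$; and ``surjective on spectra plus finite'' does not let you descend the vanishing of a module (descent of unramifiedness along a cover of the \emph{source} needs at least a flat, e.g.\ \'{e}tale or faithfully flat, cover, not a mere surjection). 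The fibrewise variant hides the same hole: separability of $\kappa(\mathfrak{q})/\kappa(\mathfrak{p})$ does pass to the subextension, but $\mathfrak{p}B_{\mathfrak{q}}=\mathfrak{q}B_{\mathfrak{q}}$ (with $B=\mathcal{O}_{L_{0}}$) does not follow from $\mathfrak{p}C_{\mathfrak{Q}}=\mathfrak{Q}C_{\mathfrak{Q}}$ unless you know $\mathfrak{p}C_{\mathfrak{Q}}\cap B_{\mathfrak{q}}=\mathfrak{p}B_{\mathfrak{q}}$, which is again a flatness assertion; the multiplicativity of ramification indices you are implicitly importing from the Dedekind situation only governs height-one primes, while these rings have dimension $n+1$. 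The missing ingredient is normality of the rings of integers: by your first half, $Spec(\mathcal{O}_{L})\to Spec(\mathcal{O}_{L_{0}})$ is a finite unramified dominant morphism of integral schemes with \emph{normal} target, hence \'{e}tale (SGA1, Exp.\ I, or EGA IV, 18.10 \cite{sga1,ega}), hence faithfully flat (finite, flat and dominant, so surjective). \'{E}taleness gives $\Omega_{\mathcal{O}_{L_{0}}/\mathcal{O}_{K}}\otimes_{\mathcal{O}_{L_{0}}}\mathcal{O}_{L}\cong\Omega_{\mathcal{O}_{L}/\mathcal{O}_{K}}=0$, and faithful flatness then kills $\Omega_{\mathcal{O}_{L_{0}}/\mathcal{O}_{K}}$, i.e.\ $L_{0}/K$ is arithmetically unramified. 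With that insertion your outline becomes a proof; as written, the second half does not go through.
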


\begin{lemma}
\emph{(see \cite{an4,an6,an7})}
Let $L_{1}$ and $L_{2}$ be two arithmetic function fields over variables $%
t_{1},t_{2},\cdots, t_{n}$. Suppose that $L_{1}$ and $L_{2}$ are
arithmetically unramified over $K$, respectively. Then the composite $%
L_{1}\cdot L_{2}$ of fields is also arithmetically unramified over $K$.
\end{lemma}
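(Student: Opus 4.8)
The plan is to reduce the assertion to the standard fact that, for affine schemes, the fibre product of two unramified morphisms is unramified, and to identify the ring of algebraic integers of the composite $L_{1}\cdot L_{2}$ with (the normalization of) the tensor product $\mathcal{O}_{L_{1}}\otimes_{\mathcal{O}_{K}}\mathcal{O}_{L_{2}}$ up to the relevant localization. First I would fix the variables $t_{1},\dots,t_{n}$ once and for all — legitimate by \emph{Lemma 5.3} — so that all three rings $\mathcal{O}_{K}$, $\mathcal{O}_{L_{1}}$, $\mathcal{O}_{L_{2}}$ contain the common base $\mathbb{Z}[t_{1},\dots,t_{n}]$, and I would set $\mathcal{O}=\mathcal{O}_{[t_{1},\dots,t_{n}](L_{1}\cdot L_{2})}$. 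The key algebraic input is that $\mathcal{O}$ is the integral closure of $\mathbb{Z}[t_{1},\dots,t_{n}]$, equivalently of $\mathcal{O}_{K}$, inside the field $L_{1}\cdot L_{2}$, and that the natural map $\mathcal{O}_{L_{1}}\otimes_{\mathcal{O}_{K}}\mathcal{O}_{L_{2}}\to \mathcal{O}$ has image generating $\mathcal{O}$ over $\mathcal{O}_{K}$ as a ring; this is where one uses that $L_{1}\cdot L_{2}$ is generated as a field by $L_{1}$ and $L_{2}$.

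Next I would invoke the hypothesis: $\operatorname{Spec}(\mathcal{O}_{L_{i}})\to \operatorname{Spec}(\mathcal{O}_{K})$ is unramified for $i=1,2$. By the standard permanence properties of unramified morphisms (see \cite{f-k,sga1,mln,sz}), the base change
\begin{equation*}
\operatorname{Spec}(\mathcal{O}_{L_{1}})\times_{\operatorname{Spec}(\mathcal{O}_{K})}\operatorname{Spec}(\mathcal{O}_{L_{2}})\longrightarrow \operatorname{Spec}(\mathcal{O}_{L_{2}})
\end{equation*}
is unramified, and composing with $\operatorname{Spec}(\mathcal{O}_{L_{2}})\to\operatorname{Spec}(\mathcal{O}_{K})$ shows that $\operatorname{Spec}(\mathcal{O}_{L_{1}}\otimes_{\mathcal{O}_{K}}\mathcal{O}_{L_{2}})\to\operatorname{Spec}(\mathcal{O}_{K})$ is unramified. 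Since unramifiedness is preserved under the map $\operatorname{Spec}(\mathcal{O})\to\operatorname{Spec}(\mathcal{O}_{L_{1}}\otimes_{\mathcal{O}_{K}}\mathcal{O}_{L_{2}})$ provided the latter is an open (or at least unramified) immersion onto the component of $\operatorname{Spec}$ corresponding to the composite field, composing once more gives that $\operatorname{Spec}(\mathcal{O})\to\operatorname{Spec}(\mathcal{O}_{K})$ is unramified, which is precisely the assertion that $L_{1}\cdot L_{2}/K$ is arithmetically unramified in the sense of \emph{Definition 5.2}.

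The main obstacle is the last identification: $\operatorname{Spec}(\mathcal{O})$ need not literally be an open subscheme of $\operatorname{Spec}(\mathcal{O}_{L_{1}}\otimes_{\mathcal{O}_{K}}\mathcal{O}_{L_{2}})$, because the tensor product may fail to be normal and may have nilpotents or extra components, whereas $\mathcal{O}$ is by definition integrally closed in $L_{1}\cdot L_{2}$. The way I would handle this is to note that unramifiedness is a local, fibrewise condition: it suffices to check that for every prime $\mathfrak{p}$ of $\mathcal{O}_{K}$ the fibre ring $\mathcal{O}\otimes_{\mathcal{O}_{K}}\kappa(\mathfrak{p})$ is a finite separable $\kappa(\mathfrak{p})$-algebra, and this fibre is a localization/subquotient of the fibre of the tensor product, which we already know is finite separable; normalization does not spoil this because a finite ring extension that becomes an isomorphism after tensoring with the residue field at each point contributes nothing to ramification. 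Alternatively — and this is probably cleaner for the paper — one localizes away from the (closed, proper) ramification locus of $\mathcal{O}_{L_{1}}\otimes_{\mathcal{O}_{K}}\mathcal{O}_{L_{2}}$ over $\mathcal{O}_{K}$, over which the tensor product is already étale hence normal, so that $\mathcal{O}$ agrees with it there, and outside that locus there is nothing to prove. Either route reduces everything to \emph{Lemma 5.7} (transitivity) together with base change for unramified morphisms, so no genuinely new ingredient beyond the cited scheme-theoretic facts is needed.
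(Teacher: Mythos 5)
The paper itself offers no argument for this lemma: it is quoted from \cite{an4,an6,an7} with only a blanket appeal to ``preliminary facts on unramified morphisms and \'etale morphisms'', so your proposal can only be measured against the standard argument it is trying to reproduce. Your first half is fine (fixing the variables via \emph{Lemma 5.3}, base change plus composition shows $Spec(\mathcal{O}_{L_{1}}\otimes_{\mathcal{O}_{K}}\mathcal{O}_{L_{2}})\to Spec(\mathcal{O}_{K})$ is unramified), but the step that carries this over to $\mathcal{O}=\mathcal{O}_{L_{1}L_{2}}$ has a genuine gap. Your ``key algebraic input'' that the image of $\mathcal{O}_{L_{1}}\otimes_{\mathcal{O}_{K}}\mathcal{O}_{L_{2}}$ generates $\mathcal{O}$ is false in general: the subring $\mathcal{O}_{L_{1}}\cdot\mathcal{O}_{L_{2}}$ can have index $>1$ in the integral closure (already for composites of quadratic fields). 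Likewise the claim that the fibre $\mathcal{O}\otimes_{\mathcal{O}_{K}}\kappa(\mathfrak{p})$ is a ``localization/subquotient'' of the fibre of the tensor product is not true: for an inclusion $B\subseteq\mathcal{O}$ with the same fraction field, the two fibres are in general incomparable (compare $\mathbb{Z}[\sqrt{-3}]\subseteq\mathbb{Z}[(1+\sqrt{-3})/2]$ at $p=2$, where the fibres are $\mathbb{F}_{2}[x]/(x+1)^{2}$ and $\mathbb{F}_{4}$, neither a subquotient of the other). So as written, unramifiedness of the tensor product does not transfer to its normalization, and your ``localize away from the ramification locus'' alternative does not repair this, since under the hypotheses that locus is empty and the issue is precisely whether the tensor-product image is already normal.

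The missing ingredient is the classical fact that over a \emph{normal} base, unramified forces \'etale for a dominating domain: if $A$ is a normal domain and $C$ is a domain, quasi-finite and dominant over $A$ and unramified at every prime, then $C$ is \'etale over $A$ (locally $C$ is a quotient of an \'etale $A$-algebra, which is a normal domain of the same dimension, so the quotient map is an isomorphism); in particular $C$ is normal (see \cite{sga1,mln}). Apply this with $A=\mathcal{O}_{K}$ and $C=\mathcal{O}_{L_{1}}\cdot\mathcal{O}_{L_{2}}$, the image of the tensor product in $L_{1}L_{2}$: it is finite over $\mathcal{O}_{K}$, unramified by your base-change/composition step, hence \'etale and normal, hence it \emph{equals} the integral closure $\mathcal{O}_{L_{1}L_{2}}$, which is therefore unramified (indeed \'etale) over $\mathcal{O}_{K}$. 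With that lemma inserted, your outline closes; without it, the passage from the tensor product to $\mathcal{O}_{L_{1}L_{2}}$ is unjustified. (Note also that transitivity, your appeal to \emph{Lemma 5.7}, plays no real role here.)
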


\begin{lemma}
\emph{(see \cite{an4,an6,an7})}
Let $K^{\prime}\supseteq K$ be an arithmetic function field over variables $%
t_{1},t_{2},\cdots, t_{n}$. Suppose that $L$ is arithmetically unramified
over $K$. Then the composite $L^{\prime}=L\cdot K^{\prime}$ of fields is
also arithmetically unramified over $K^{\prime}$.
\end{lemma}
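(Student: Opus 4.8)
The plan is to unwind the definition and then reduce the statement to two standard facts about unramified and \'{e}tale morphisms of schemes: stability under base change, and the implication ``finite $+$ unramified over a normal base $\Rightarrow$ \'{e}tale''. Write $R=\mathbb{Z}[t_{1},\dots,t_{n}]$, so that $\mathcal{O}_{K},\mathcal{O}_{L},\mathcal{O}_{K^{\prime}},\mathcal{O}_{L^{\prime}}$ are the integral closures of $R$ in $K,L,K^{\prime},L^{\prime}$ respectively. Since $R$ is of finite type over $\mathbb{Z}$, each of these is module-finite over $R$; hence $\mathcal{O}_{L}$ is finite over $\mathcal{O}_{K}$ and $\mathcal{O}_{L^{\prime}}$ is finite over $\mathcal{O}_{K^{\prime}}$, and all four are normal Noetherian domains with $\mathcal{O}_{K}\subseteq\mathcal{O}_{L}$ and $\mathcal{O}_{K}\subseteq\mathcal{O}_{K^{\prime}}$. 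Clearing denominators gives $\mathcal{O}_{L}\otimes_{\mathcal{O}_{K}}K=L$, so the generic fibre of $Spec(\mathcal{O}_{L})\to Spec(\mathcal{O}_{K})$ is $Spec(L)$; as this is unramified, $L/K$ is finite separable, and therefore $L^{\prime}/K^{\prime}=LK^{\prime}/K^{\prime}$ is finite separable as well. By \emph{Definition 5.3}, the lemma asserts precisely that $Spec(\mathcal{O}_{L^{\prime}})\to Spec(\mathcal{O}_{K^{\prime}})$ is unramified, so this is what I would prove.

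Next I would upgrade the hypothesis from unramified to \'{e}tale. Since $\mathcal{O}_{K}\to\mathcal{O}_{L}$ is finite and unramified, $\mathcal{O}_{K}$ is normal, and $\mathcal{O}_{L}$ is a domain finite over (hence of the same Krull dimension as) $\mathcal{O}_{K}$, the map $\mathcal{O}_{K}\to\mathcal{O}_{L}$ is \'{e}tale. One sees this by passing to the strict henselizations $A^{sh}$ of $\mathcal{O}_{K}$ at its primes: there $\mathcal{O}_{L}\otimes_{\mathcal{O}_{K}}A^{sh}$ is a finite unramified $A^{sh}$-algebra which is reduced (base change of the domain $\mathcal{O}_{L}$ along an ind-\'{e}tale map) and equidimensional of dimension $\dim A^{sh}$ (its minimal primes contract to $(0)$ in $A^{sh}$); hence it is a finite product of quotients $A^{sh}/\mathfrak{a}_{i}$ with each $\mathfrak{a}_{i}$ radical and $A^{sh}/\mathfrak{a}_{i}$ of full dimension, which forces $\mathfrak{a}_{i}=0$ because $A^{sh}$ is a domain, so the product is a finite free $A^{sh}$-module. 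Granting this, I would base-change along $Spec(\mathcal{O}_{K^{\prime}})\to Spec(\mathcal{O}_{K})$: the morphism $Spec(\mathcal{O}_{L}\otimes_{\mathcal{O}_{K}}\mathcal{O}_{K^{\prime}})\to Spec(\mathcal{O}_{K^{\prime}})$ is again \'{e}tale, so $\mathcal{O}_{L}\otimes_{\mathcal{O}_{K}}\mathcal{O}_{K^{\prime}}$ is flat with geometrically regular fibres over the normal domain $\mathcal{O}_{K^{\prime}}$ and is therefore a normal Noetherian ring, i.e.\ a finite product of normal domains.

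Finally I would identify $\mathcal{O}_{L^{\prime}}$ inside this product. The total ring of fractions of $\mathcal{O}_{L}\otimes_{\mathcal{O}_{K}}\mathcal{O}_{K^{\prime}}$ is $L\otimes_{K}K^{\prime}$, which, $L/K$ being separable, is a finite product of finite separable field extensions of $K^{\prime}$, one factor of which is the compositum $L^{\prime}=LK^{\prime}$ (formed inside the fixed algebraic closure). Accordingly $\mathcal{O}_{L}\otimes_{\mathcal{O}_{K}}\mathcal{O}_{K^{\prime}}$ is the product of the corresponding normal domains, and the factor $D$ with $Fr(D)=L^{\prime}$ is a normal domain, finite over $\mathcal{O}_{K^{\prime}}$, with fraction field $L^{\prime}$; hence $D=\mathcal{O}_{L^{\prime}}$, the integral closure of $\mathcal{O}_{K^{\prime}}$ in $L^{\prime}$. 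Being a direct factor of an \'{e}tale $\mathcal{O}_{K^{\prime}}$-algebra, $\mathcal{O}_{L^{\prime}}$ is \'{e}tale, and in particular unramified, over $\mathcal{O}_{K^{\prime}}$; that is, $L^{\prime}=L\cdot K^{\prime}$ is arithmetically unramified over $K^{\prime}$.

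The only non-formal ingredient is the upgrade ``finite unramified over a normal base $\Rightarrow$ \'{e}tale'' applied to $\mathcal{O}_{K}\to\mathcal{O}_{L}$, and this is exactly where the normality of $\mathcal{O}_{K}$ and the fact that $\mathcal{O}_{L}$ is a reduced, equidimensional, module-finite extension genuinely enter; without it the tensor product $\mathcal{O}_{L}\otimes_{\mathcal{O}_{K}}\mathcal{O}_{K^{\prime}}$ could carry nilpotents or spurious components and the identification of $\mathcal{O}_{L^{\prime}}$ as a clean direct factor would break down. Once that fact is in hand, the remainder is formal manipulation of base change, of composites, and of the fact that \'{e}tale algebras over normal rings are normal --- precisely the ``preliminary facts on unramified and \'{e}tale morphisms between schemes'' invoked at the head of \S 5.2. (One could equally avoid base-changing $\mathcal{O}_{L}$ itself: the conormal sequence and the base-change formula for K\"ahler differentials give $\Omega_{B_{0}/\mathcal{O}_{K^{\prime}}}=0$ for $B_{0}=\mathcal{O}_{L}\cdot\mathcal{O}_{K^{\prime}}\subseteq L^{\prime}$, after which the same upgrade shows $B_{0}$ is \'{e}tale over $\mathcal{O}_{K^{\prime}}$, hence normal, hence equal to $\mathcal{O}_{L^{\prime}}$.)
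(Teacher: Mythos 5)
Your argument is correct, but note that the paper itself offers no proof of this lemma: Lemmas 5.4--5.7 are stated with a pointer to \cite{an4,an6,an7} and the remark that they follow from ``preliminary facts on unramified morphisms and \'{e}tale morphisms between schemes'', so what you have written is a genuine proof where the paper only gestures at one. Your route --- (i) observe that $\mathcal{O}_{K},\mathcal{O}_{L},\mathcal{O}_{K'},\mathcal{O}_{L'}$ are module-finite normal extensions of $R=\mathbb{Z}[t_{1},\dots,t_{n}]$ (this uses that $R$, being of finite type over $\mathbb{Z}$, is Nagata, so integral closures in finite extensions are finite); (ii) upgrade the hypothesis from unramified to \'{e}tale via ``finite, unramified and dominant over a normal base implies \'{e}tale'' (\cite{sga1}, Exp.\ I, \S 9); (iii) base-change to $\mathcal{O}_{K'}$, use that an \'{e}tale algebra over a normal domain is a finite product of normal domains with total quotient ring $L\otimes_{K}K'$, and identify the factor whose fraction field is $LK'$ with $\mathcal{O}_{L'}$ --- is the standard argument, and you correctly isolate step (ii) as the only non-formal ingredient: without it $\mathcal{O}_{L}\otimes_{\mathcal{O}_{K}}\mathcal{O}_{K'}$ could fail to be reduced or could have excess components, and $\mathcal{O}_{L'}$ might strictly contain the subring $\mathcal{O}_{L}\cdot\mathcal{O}_{K'}$ on which unramifiedness is formal. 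Two compressed points would merit one extra line in a full write-up, though neither is a gap: the identification $D=\mathcal{O}_{L'}$ also uses that the integral closure of $\mathcal{O}_{K'}$ in $L'$ equals the integral closure of $R$ in $L'$ (transitivity of integral dependence), and, in your henselization argument, the fact that a minimal prime of $\mathcal{O}_{L}\otimes_{\mathcal{O}_{K}}A^{sh}$ contracts to $(0)$ in $A^{sh}$ needs the small observation that every prime of $A^{sh}$ lying over $(0)$ of the local ring of $\mathcal{O}_{K}$ is minimal (quasi-finiteness at each \'{e}tale stage of the colimit).
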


\subsection{The maximal formally unramified extension is nothing other than the maximal arithmetically unramified extension}

Now take the maximal arithmetically unramified extension of an arithmetic
variety as one does in algebraic number theory, which will be proved to be equal to the maximal formally
unramified extension of the arithmetic variety.

Let $L$ be an arithmetic function field over variables $t_{1},t_{2},\cdots,
t_{n}$. Let $\Omega$ be an algebraic closure of $L$.

Put
\begin{itemize}
\item $\left[ L \right](\Omega)^{au}\triangleq$ the set of all finite
arithmetically unramified subextensions of $L$ contained in $\Omega$.

\item $(L;\Omega)^{au}\triangleq {{\lim}_{\rightarrow_{Z\in {\left[ L \right]%
(\Omega)^{au}}}}}{\ \lambda_{Z}(Z)}$, i.e., the direct limit of the direct
system of rings indexed by $\left[ L \right](\Omega)^{au}$, where each $%
\lambda_{Z}:Z\in {\left[ L \right](\Omega)^{au}}\to \Omega$ is the $L$%
-embedding,
\end{itemize}
where $\left[ L;\Omega \right]^{au}$ is taken as a directed set defined by set
inclusion.

The subfield $(L;\Omega)^{au}$ of $\Omega$ is called the \textbf{maximal
arithmetically unramified extension} of $L$ in $\Omega$.

Write
\begin{itemize}
\item $k(X;\Omega)^{au}\triangleq (k(X);\Omega)^{au}$
for an arithmetic variety $X$ and an algebraic closure $\Omega$ of the
function field $k(X)$.
\end{itemize}

\begin{lemma}
\emph{(see \cite{an4,an6,an7})}
Let $\left[ L \right](\Omega)^{au}_{0}$ be the set of all finite
arithmetically unramified Galois subextensions of $L$ contained in $\Omega$.
Then we have
\begin{equation*}
(L;\Omega)^{au}= {{\lim}_{\rightarrow_{Z\in {\left[ L \right]%
(\Omega)^{au}_{0}}}}}{\ \lambda_{Z}(Z)}.
\end{equation*}
Moreover, $(L;\Omega)^{au}$ is an algebraic Galois extension of $L$.
\end{lemma}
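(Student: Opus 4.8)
The statement to prove is Lemma 5.9 (the last displayed statement), which asserts two things: (1) that $(L;\Omega)^{au}$ can be computed as the direct limit over only the \emph{Galois} finite arithmetically unramified subextensions, and (2) that $(L;\Omega)^{au}$ is itself an algebraic Galois extension of $L$. Let me plan a proof.

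The plan is to first establish that $[L](\Omega)^{au}_0$ is cofinal in $[L](\Omega)^{au}$ as directed sets under inclusion; once cofinality is in hand, the equality of direct limits is formal, since a direct limit over a directed set is canonically isomorphic to the direct limit over any cofinal subset. So the first and main step is: given a finite arithmetically unramified subextension $Z$ of $L$ in $\Omega$ (so $L \subseteq Z \subseteq \Omega$ with $[Z:L]$ finite and $Z/L$ arithmetically unramified), produce a finite arithmetically unramified \emph{Galois} extension $Z_0$ with $Z \subseteq Z_0 \subseteq \Omega$. The natural candidate is the Galois closure $\widetilde{Z}$ of $Z/L$ inside $\Omega$: it is finite over $L$ and Galois over $L$ by ordinary field theory (we are in characteristic zero since $L$ is an arithmetic function field, so $Z/L$ is automatically separable and the normal closure is Galois). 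The content is then to check that $\widetilde{Z}/L$ is arithmetically unramified.

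For that arithmetic-unramifiedness, I would argue as follows. Write $\widetilde{Z}$ as the compositum of the finitely many $L$-conjugates $Z = Z^{(1)}, Z^{(2)}, \dots, Z^{(r)}$ of $Z$ inside $\Omega$, i.e. $\widetilde{Z} = Z^{(1)} \cdots Z^{(r)}$. Each conjugate $Z^{(i)} = \sigma_i(Z)$ for some $L$-embedding $\sigma_i : Z \to \Omega$; since an $L$-isomorphism carries the ring of algebraic integers over $t_1,\dots,t_n$ to the ring of algebraic integers (these are defined intrinsically as integral closures of $\mathbb{Z}[t_1,\dots,t_n]$, which sits in $L$ and is fixed by $\sigma_i$), arithmetic unramifiedness of $Z/L$ transfers to each $Z^{(i)}/L$. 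Then I invoke Lemma 5.7 (stability of arithmetic unramifiedness under composites over a fixed base): applying it $r-1$ times to $Z^{(1)}, \dots, Z^{(r)}$, each arithmetically unramified over $L$, gives that $\widetilde{Z} = Z^{(1)} \cdots Z^{(r)}$ is arithmetically unramified over $L$. (One should note that each $Z^{(i)}$ and the partial composites are again arithmetic function fields over the same variables $t_1,\dots,t_n$, so Lemma 5.7 applies.) This establishes $\widetilde{Z} \in [L](\Omega)^{au}_0$ and $Z \subseteq \widetilde{Z}$, hence cofinality, hence the claimed equality of direct limits.

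For the second assertion, that $M \triangleq (L;\Omega)^{au}$ is an algebraic Galois extension of $L$: algebraicity is clear because $M$ is a union (direct limit along inclusions) of finite extensions of $L$. It is normal over $L$ because it is generated by the Galois subextensions $\widetilde{Z}$, each of which is stable under every $L$-automorphism of $\Omega$ (being normal over $L$), so $M$ is stable under $\mathrm{Aut}(\Omega/L)$ and is therefore normal. It is separable because $\mathrm{char}\, L = 0$. Hence $M/L$ is Galois. The main obstacle in the whole argument is the transfer step for composites — verifying that the normal closure stays arithmetically unramified — but this is precisely packaged by Lemma 5.7 together with the observation that conjugation by an $L$-embedding preserves the ring of algebraic integers; once those are invoked the rest is the standard cofinality bookkeeping for direct limits.
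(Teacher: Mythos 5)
Your proof is correct and takes essentially the same route as the paper: the paper's entire proof is the assertion that $[L](\Omega)^{au}_{0}$ is a cofinal directed subset of $[L](\Omega)^{au}$ (left as a preliminary fact, with the details deferred to the cited references), and your argument simply supplies those details — the Galois closure as a compositum of conjugates, the conjugates being arithmetically unramified because $L$-embeddings preserve the rings of algebraic integers over $t_{1},\dots,t_{n}$, and stability under composita (which in the paper's numbering is Lemma 5.6, the base-change statement being Lemma 5.7; also the statement you are proving is Lemma 5.8, not 5.9) — followed by the standard cofinal-limit and normality/separability bookkeeping. No gaps.
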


\begin{proof}
It is immediate from the preliminary fact that $\left[ L \right]%
(\Omega)^{au}_{0}$ is a cofinal directed subset of $\left[ L \right]%
(\Omega)^{au}$.
\end{proof}

Fortunately there exists the following lemma which says that for an arithmetic variety,  the maximal arithmetically
unramfied extension is equal to the maximal formally ramified
extension.

\begin{lemma}
Let $X$ be an arithmetic variety and let $\Omega$ be an algebraic closure of the function field $k(X)$. Then we have
\begin{equation*}
k(X;\Omega)^{au}= k(X;\Omega)^{un}
\end{equation*}
as subfields of $\Omega$.
\end{lemma}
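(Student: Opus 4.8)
The plan is to prove the equality $k(X;\Omega)^{un}=k(X;\Omega)^{au}$ by establishing two inclusions, each reduced to a statement about finite Galois subextensions of $\Omega/k(X)$, and then invoking the directed-limit descriptions of both fields (the one for $k(X;\Omega)^{un}$ given in \S1.2, and the one for $k(X;\Omega)^{au}$ given in \emph{Lemma 5.8}, namely the limit over finite arithmetically unramified Galois subextensions). So the whole problem collapses to the claim: a finite Galois extension $L/k(X)$ inside $\Omega$ appears in the $un$-system if and only if it appears in the $au$-system. Equivalently, writing $\mathcal{O}_{k(X)}$ for the ring of algebraic integers of $k(X)$ over a choice of variables and $\mathcal{O}_L$ for its integral closure in $L$, I must show: there exists an irreducible finite \'etale Galois cover $Y\to X$ with $k(Y)=L$ if and only if $\operatorname{Spec}(\mathcal{O}_L)\to\operatorname{Spec}(\mathcal{O}_{k(X)})$ is unramified (equivalently \'etale, since it is automatically finite and generically an isomorphism up to the extension of fraction fields). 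By \emph{Remark 1.7} (the isomorphism $\pi_1^{et}(X)\cong\pi_1^{et}(\operatorname{Spec}\mathcal{O}_{k(X)})$) and \emph{Lemma 5.3} (independence of the chosen variables), I may replace $X$ throughout by the affine, normal, finite-type $\mathbb{Z}$-scheme $\operatorname{Spec}(\mathcal{O}_{k(X)})$.

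For the inclusion $k(X;\Omega)^{un}\subseteq k(X;\Omega)^{au}$: take a finite Galois $L/k(X)$ that appears as $k(Y)$ for an irreducible finite \'etale Galois cover $Y\to X$. First I would replace $Y$ by an essentially affine realization (\emph{Lemma 2.3}) and then pass to $X':=\operatorname{Spec}(\mathcal{O}_{k(X)})$; the normalization $Y'$ of $X'$ in $L$ is finite over $X'$ (finiteness of integral closure for these finite-type $\mathbb{Z}$-algebras), and since $X'$ is normal and $Y\to X$ is \'etale, standard purity/Zariski–Nagata arguments over the normal noetherian scheme $X'$ force $Y'\to X'$ to be \'etale as well — this is exactly the classical comparison of $\pi_1^{et}$ of a normal scheme with the Galois group of the maximal unramified extension of its function field, cited in the paper as \cite{an4,f-k,sga1,mln,sz}. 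Hence $\mathcal{O}_L$ is \'etale, a fortiori unramified, over $\mathcal{O}_{k(X)}$, so $L/k(X)$ is arithmetically unramified and $L\subseteq k(X;\Omega)^{au}$.

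For the reverse inclusion $k(X;\Omega)^{au}\subseteq k(X;\Omega)^{un}$: take a finite Galois $L/k(X)$ that is arithmetically unramified, i.e. $\operatorname{Spec}(\mathcal{O}_L)\to\operatorname{Spec}(\mathcal{O}_{k(X)})$ is unramified. Since this morphism is finite, surjective, and an isomorphism over the generic point, unramified plus flat over the normal base gives \'etale; flatness at the finitely many non-\'etale-looking codimension-one points is automatic because $\mathcal{O}_{k(X)}$ is a normal noetherian domain and $\mathcal{O}_L$ is its integral closure, which is torsion-free hence flat in codimension one, and unramified-in-codimension-one over a normal base already yields \'etale by purity. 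Thus $Y_0:=\operatorname{Spec}(\mathcal{O}_L)$ is an irreducible finite \'etale cover of $\operatorname{Spec}(\mathcal{O}_{k(X)})$ with function field $L$, and it is Galois (being the integral closure in a Galois field extension, it carries a transitive $\operatorname{Gal}(L/k(X))$-action on fibers). Transporting back along $\operatorname{Spec}(\mathcal{O}_{k(X)})\cong X$ at the level of $\pi_1^{et}$, or more concretely via the Galois-cover construction of \emph{Theorem 4.1} applied to the extension $L/k(X)$, produces a pointed irreducible finite \'etale Galois cover of $X$ with function field $L$; hence $L$ lies in the $un$-system and $L\subseteq k(X;\Omega)^{un}$.

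Taking direct limits over all such $L$ on both sides gives $k(X;\Omega)^{au}=k(X;\Omega)^{un}$. The main obstacle is the reverse inclusion — specifically, upgrading ``unramified'' to ``\'etale'' for $\operatorname{Spec}(\mathcal{O}_L)\to\operatorname{Spec}(\mathcal{O}_{k(X)})$: one must know that the integral closure $\mathcal{O}_L$ is actually \emph{flat} over $\mathcal{O}_{k(X)}$ (equivalently, that no vertical or horizontal components get crushed), which is where normality of the base and the Zariski–Nagata purity theorem for the branch locus over a regular/normal noetherian scheme do the real work. The forward inclusion and all bookkeeping with direct limits and the variable-independence of $\mathcal{O}_{k(X)}$ are routine given \emph{Lemma 2.3}, \emph{Lemma 5.3}, \emph{Lemma 5.8}, \emph{Theorem 4.1}, and \emph{Remark 1.7}.
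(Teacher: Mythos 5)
Your overall strategy is the same as the paper's: pass from $X$ to the normal affine model $Spec(\mathcal{O}_{k(X)})$ and then invoke the classical correspondence, over a normal integral base, between finite \'etale covers and finite extensions of the function field with unramified normalization. But the step on which everything hangs is precisely the replacement of $X$ by $Spec(\mathcal{O}_{k(X)})$, and your justification of it does not work. First, the isomorphism $\pi_1^{et}(X)\cong\pi_1^{et}(Spec(\mathcal{O}_{k(X)}))$ you quote (Remark 1.6, which you cite as 1.7) is itself a consequence of Theorem 1.1, whose proof uses Lemma 5.9 in Step 6; using it here is circular. Second, and more seriously, your forward inclusion asserts that if $Y\to X$ is finite \'etale with $k(Y)=L$, then the normalization of $Spec(\mathcal{O}_{k(X)})$ in $L$ is unramified, "by purity". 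This is false, and no purity argument can repair it: \'etaleness of $Y\to X$ constrains ramification of $L$ only over points of $X$, while $k(X;\Omega)^{au}$ is computed on the fixed model $Spec(\mathcal{O}_{k(X)})$, which in general has height-one primes that are invertible, hence invisible, on $X$. Concretely, let $X=Spec\bigl(\mathbb{Z}[x,(4x^{2}+3)^{-1}]\bigr)$, an integral finite-type scheme surjective over $Spec(\mathbb{Z})$, and let $Y=Spec\bigl(\mathcal{O}_{X}(X)[y]/(y^{2}-y+x^{2}+1)\bigr)$. The discriminant $-(4x^{2}+3)$ is a unit on $X$, so $Y$ is an irreducible finite \'etale Galois cover of $X$ of degree $2$ and $L=k(Y)=\mathbb{Q}(x)\bigl(\sqrt{-(4x^{2}+3)}\,\bigr)$ lies in $k(X;\Omega)^{un}$; yet $L$ is ramified over the height-one prime $(4x^{2}+3)$ of $\mathcal{O}_{k(X)}=\mathbb{Z}[x]$, so by Lemmas 5.5--5.6 it cannot lie in $k(X;\Omega)^{au}$. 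Thus the forward inclusion cannot be established as you propose (indeed it fails for this $X$). Note also that Zariski--Nagata purity presupposes a regular base and only locates the branch locus in codimension one; it never shows the branch locus is empty.

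There is a smaller gap on the reverse inclusion: transporting $Spec(\mathcal{O}_{L})\to Spec(\mathcal{O}_{k(X)})$ back to $X$ "via Theorem 4.1" does not produce a finite \'etale cover of $X$ --- the construction of Theorem 4.1 (adjoining Galois orbits of generators chart by chart) gives a quasi-galois cover with no finiteness, flatness or unramifiedness control, and over a possibly non-normal $X$ the normalization of $X$ in $L$ need not be \'etale either. That direction does go through when $X$ happens to be an open subscheme of $Spec(\mathcal{O}_{k(X)})$ (base change of the \'etale cover), but not for an arbitrary model. The underlying issue is that $k(X;\Omega)^{un}$ depends on the model $X$, whereas $k(X;\Omega)^{au}$ depends only on the field $k(X)$; any correct argument must either restrict the class of models or prove model-independence of the $un$-system, and neither your reduction nor the purity citation does this. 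To be fair, the paper's own proof makes the same silent identification $(L;\Omega)^{un}=(k(Spec(\mathcal{O}_{L}));\Omega)^{un}$ in its first lines, so your proposal faithfully mirrors the published argument --- but this identification is a genuine gap, not routine bookkeeping, and your write-up inherits it at exactly the point you flag as "the main obstacle".
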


\begin{proof}
Let $L=k(X)$. As $L$ is an arithmetic function field over
several variables, consider the ring $\mathcal{O}_{L}$ of algebraic integers of $L
$. We have
\begin{equation*}
L=k(Spec(\mathcal{O}_{L});
\end{equation*}
then
\begin{equation*}
(L;\Omega)^{un}=(k(Spec(\mathcal{O}_{L}));\Omega)^{un}.
\end{equation*}

To prove the lemma, it suffices to prove
\begin{equation*}
(k(Spec(\mathcal{O}_{L}));\Omega)^{un}=(L;\Omega)^{au}
\end{equation*}
holds.

In deed, we have
\begin{equation*}
(L;\Omega)^{au}= {{\lim}_{\rightarrow_{Z\in {\left[ L \right]%
(\Omega)^{au}_{0}}}}}{\ \lambda_{Z}(Z)};
\end{equation*}
\begin{equation*}
(k(Spec(\mathcal{O}_{L}));\Omega)^{un}= {{\lim}_{\rightarrow_{X\in {\left[%
Spec(\mathcal{O}_{L}); \Omega \right]_{et}}}}}{\lambda_{X}(k(X))}.
\end{equation*}

On the other hand, put
\begin{equation*}
\Sigma=\{k(X)\mid X\in {\left[Spec(\mathcal{O}_{L});
\Omega \right]_{et}}\}.
\end{equation*}
Then $\Sigma$ is a directed set, where the partial order is given by set inclusion.

As $Spec(\mathcal{O}_{L})$ is a normal scheme, it is easily seen that each $X\in {%
\left[Spec(\mathcal{O}_{L}); \Omega \right]_{et}}$ is normal from
preliminary facts on \'{e}tale covers (see \cite{sga1,mln,sz}). It follows that $\left[ L %
\right](\Omega)^{au}_{0}$ is a cofinal directed subset of $\Sigma$.

Hence,
\begin{equation*}
\begin{array}{l}
(k(Spec(\mathcal{O}_{L}));\Omega)^{un} \\
={{\lim}_{\rightarrow_{X\in {\left[Spec(\mathcal{O}_{L}); \Omega \right]_{et}%
}}}}{\lambda_{X}(k(X))} \\
= {{\lim}_{\rightarrow_{Z\in {\Sigma}}}}{\lambda_{Z}(Z)}
\\
={{\lim}_{\rightarrow_{Z\in {\left[ L \right](\Omega)^{au}_{0}}}}}{\
\lambda_{Z}(Z)} \\
=(L;\Omega)^{au}.%
\end{array}%
\end{equation*}
This completes the proof.
\end{proof}

By \emph{Lemma 5.9} we have such a computation of the \'{e}tale fundamental group of an arithmetic
variety.

\begin{remark}
For any arithmetic variety $X$, there is an isomorphism
\begin{equation*}
\pi _{1}^{et}\left( X,s\right) \cong Gal\left( {k(X; \Omega})^{au}/k\left( X\right)
\right)
\end{equation*}
 of groups in a natural manner. Here, $s$ is a geometric point of $X$ over an algebraic closure $\Omega$ of the function field $k(X)$.
\end{remark}

\section{Proof of the Main Theorem}

\subsection{Recalling the \'{e}tale fundamental group}

Fixed a connected scheme $Z$. By a \textbf{finite \'{e}tale cover} of $Z$
we understand a scheme that is finite and \'{e}tale over $Z$
by a surjective morphism. Denote by $$Fet\left[ Z\right] $$ the set of all finite \'{e}tale covers of $Z$.

Fixed a geometric point $s$ of $Z$ over a given separable closure $\Omega $
of $k\left( Z\right) $. There is a fiber functor $Fib_{s}$ on $Fet\left[ Z %
\right] $ given by $$Fib_{s}\left( Y\right) =sp\left( Y\times _{Z}Spec\left(
\Omega \right) \right) $$ for any $Y\in Fet\left[ Z\right].$ Here $sp\left(
W\right) $ denotes the underlying space of a scheme $W.$

Then the \textbf{\'{e}tale fundamental group} of $Z$ over $s,$ denoted by $%
\pi _{1}^{et}\left( Z;s\right) ,$ is defined to be the group of
automorphisms of the fiber functor $Fib_{s}$ over $Z$. See \cite{sga1,mln,sz} for details.

Let $Y$ be a finite \'{e}tale cover of $Z$ by a morphism $f$ and let $t$ be
a geometric point of $Y$ over $s$ with $s=f\circ t.$ Then $\left( Y,t\right)
$ is said to be a \textbf{pointed finite \'{e}tale cover} of $\left(
Z,s\right) .$

\begin{remark}
Let $Fet\left[ Z;\Omega \right] $ denote the set of pointed finite \'{e}tale
covers of $\left( Z,s\right) .$ For any $\left( Y_{1},t_{1}\right) ,\left(
Y_{2},t_{2}\right) \in Fet\left[ Z;\Omega \right] ,$ we say $\left(
Y_{1},t_{1}\right) \leq \left( Y_{2},t_{2}\right) $ if and only if $\left(
Y_{2},t_{2}\right) $ is a pointed finite \'{e}tale cover of $\left(
Y_{1},t_{1}\right) .$ Then $Fet\left[ Z;\Omega \right] $ is a directed set
with $\leq .$
\end{remark}

\begin{remark}
There is an isomorphism $$\pi _{1}^{et}\left( Z;s\right) \cong
{\lim_{\leftarrow}}_{ \left( Y,t\right) \in Fet\left[ Z;\Omega \right] }Aut\left(
Y/Z\right) $$ as inverse limits of inverse systems of groups.
\end{remark}

In practice, we usually use finite \'{e}tale Galois covers to compute the inverse limit, i.e., the
\'{e}tale fundamental group. Recall that $\left( Y,t\right) \in Fet\left[
Z;\Omega \right] $ is said to be a \textbf{finite \'{e}tale Galois cover} of
$\left( Z,s\right) $ if $Y$ is connected and $\#Aut\left( Y/Z\right) $ is
equal to $\#Fib_{s}\left( Y\right) .$ It is seen that $\#Aut\left(
Y/Z\right) $ and $\#Fib_{s}\left( Y\right) $ are equal if and only if $Aut\left( Y/Z\right) $
induces a transitive action on $Fib_{s}\left( Y\right) $ (see \cite{sga1,mln,sz}).

Let $\left[ X;\Omega \right] _{et}^{\ast }$ be the set of finite \'{e}tale
Galois covers of $\left( Z,s\right) ,$ where any two $\left(
Y_{1},t_{1}\right) ,\left( Y_{2},t_{2}\right) \in \left[ X;\Omega \right]
_{et}^{\ast }$ are identified with each other if there is an isomorphism of $%
\left( Y_{1},t_{1}\right) $ onto $\left( Y_{2},t_{2}\right) $ over $\left(
Z,s\right) .$

\begin{remark}
As a directed set, $\left[ X;\Omega \right] _{et}^{\ast }$ is isomorphic to
a cofinal subset of $Fet\left[ Z;\Omega \right] .$
\end{remark}

\begin{remark}
There are isomorphisms $$\pi _{1}^{et}\left( Z;s\right) \cong
{\lim_{\leftarrow}}_{ \left( Y,t\right) \in Fet\left[ Z;\Omega \right] }Aut\left(
Y/Z\right) \cong {\lim_{\leftarrow}}_{ \left( Y,t\right) \in \left[ X;\Omega %
\right] _{et}^{\ast }}Aut\left( Y/Z\right) $$ as inverse limits of inverse
systems of groups.
\end{remark}

\subsection{A preliminary fact}

Now let $X$ be an arithmetic variety. It is seen that $X$ is connected since
an irreducible space must be connected. Let $s$ be a geometric point
of $X$ over a given separable closure $\Omega $ of $k\left( X\right) $.
As introduced in \S 1, $\left[ X;\Omega %
\right] _{et}$ consists of elements $\left(
Y,y\right) \in \left[ X;\Omega \right] _{et}^{\ast }$ such that $Y$ is
irreducible. Then we have $$\left[ X;\Omega \right] _{et}\subseteq \left[
X;\Omega \right] _{et}^{\ast }$$ as a subset.

\begin{lemma}
Every finite \'{e}tale Galois cover
of an arithmetic variety is irreducible. That is, for any arithmetic variety $X$,
we have $$\left[ X;\Omega \right] _{et}^{\ast
}=\left[ X;\Omega \right] _{et}.$$
\end{lemma}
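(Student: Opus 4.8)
The plan is to show that any finite étale Galois cover $(Y,y)$ of an arithmetic variety $X$ is automatically irreducible, so that the inclusion $\left[X;\Omega\right]_{et}\subseteq\left[X;\Omega\right]_{et}^{\ast}$ becomes an equality. The essential input is that $X$ is \emph{integral} (irreducible and reduced), hence has a single generic point $\xi$ with function field $k(X)$ a field, and that a finite étale Galois cover is connected by definition. So the real content is to upgrade connectedness to irreducibility in the presence of étaleness over an integral base.

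First I would recall that a finite étale morphism $f:Y\to X$ is flat and unramified, hence $Y$ is reduced when $X$ is reduced (étale morphisms preserve reducedness; locally $Y$ is a product of separable field extensions over the residue fields of $X$, and more globally $\mathcal{O}_Y$ injects into a product of localizations that are regular over regular-ish pieces of $X$) — in any case $Y$ is a normal (or at least reduced) scheme finite over the reduced scheme $X$. Second, since $f$ is flat with fibers that are nonempty (surjectivity) and finite over the irreducible space $X$, the irreducible components of $Y$ all dominate $X$: every associated point of $Y$ lies over the generic point $\xi$ of $X$, because flatness forces $\operatorname{Ass}(Y)\subseteq f^{-1}(\operatorname{Ass}(X))=f^{-1}(\{\xi\})$. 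Thus the number of irreducible components of $Y$ equals the number of points in the generic fiber $Y_\xi=Y\times_X\operatorname{Spec}k(X)$, and $Y_\xi=\operatorname{Spec}A$ where $A$ is a finite étale $k(X)$-algebra, i.e. a finite product of finite separable field extensions of $k(X)$.

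Third — and this is where the Galois hypothesis enters — I would invoke the definition of a finite étale Galois cover recalled in \S6.1: $Y$ is connected and $\#\operatorname{Aut}(Y/X)=\#\operatorname{Fib}_s(Y)$, equivalently $\operatorname{Aut}(Y/X)$ acts transitively on the geometric generic fiber. Since $\operatorname{Aut}(Y/X)$ permutes the irreducible components of $Y$ (automorphisms are homeomorphisms over $X$, hence permute the components of $Y_\xi$ as well), transitivity of the action on $\operatorname{Fib}_s(Y)$ together with the fact that the geometric points of $\operatorname{Fib}_s(Y)$ group into blocks indexed by the components of $Y_\xi$ forces there to be a single such block: otherwise no single automorphism could map a geometric point lying over one component to a geometric point lying over another while the group is still required to act transitively on \emph{all} geometric points — the orbit of any geometric point would be confined to components in its $\operatorname{Aut}(Y/X)$-orbit, and since $Y$ is connected the transitive action already pins everything into one orbit, so there can be only one component. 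Hence $Y_\xi$ is the spectrum of a single field $L$, so $Y$ is irreducible, and therefore $(Y,y)\in\left[X;\Omega\right]_{et}$.

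The main obstacle is the bookkeeping in the third step: making precise that the partition of $\operatorname{Fib}_s(Y)$ into components-of-$Y_\xi$ is $\operatorname{Aut}(Y/X)$-stable and that a connected étale cover with a transitive automorphism action on its geometric fiber cannot have more than one component. One clean way around this is to argue instead that a connected scheme which is \emph{normal} is automatically irreducible (the connected components of the normalization coincide with the irreducible components, and normality means $Y$ equals its normalization); so it would suffice to show $Y$ is normal, which follows from $X$ being normal — but $X$ is only assumed integral, not normal. However, by \emph{Remark 1.7} (or directly) one may replace $X$ by $\operatorname{Spec}(\mathcal{O}_{k(X)})$, which \emph{is} normal, without changing $\pi_1^{et}$; combined with the étale-locally-normal statement this makes the components-count argument unnecessary. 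I would present the direct argument via the generic fibre and the transitive Galois action as the primary proof, and remark on the normalization shortcut as an alternative.
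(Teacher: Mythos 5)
Your reduction to the generic fibre is sound: flatness over the integral base $X$ does show that every generic point of $Y$ lies over the generic point $\xi$, so the irreducible components of $Y$ correspond to the points of $Y_{\xi}=Spec(A)$ with $A$ a finite product of finite separable extensions of $k(X)$. The gap is in your third step. Transitivity of $Aut(Y/X)$ on $Fib_{s}(Y)$ only yields transitivity on the set of blocks, i.e.\ on the set of irreducible components; it does not force a single block, because an automorphism over $X$ can perfectly well carry a geometric point lying on one component to a geometric point lying on another, and connectedness of $Y$ does not rescue the inference. Concretely, over a nodal integral base the standard cyclic covers (glue $n$ copies of the normalization in a cycle over the node) are finite \'{e}tale, connected, Galois with group $\mathbb{Z}/n$ acting simply transitively on the $n$-point geometric generic fibre, and yet have $n$ irreducible components; and such a base can be taken to be an arithmetic variety in this paper's sense, e.g.\ $Spec(\mathbb{Z}[x,y]/(y^{2}-x^{3}-x^{2}))$, which is integral, of finite type and surjective over $Spec(\mathbb{Z})$, but not normal (nor unibranch) at the node. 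So the implication ``connected $+$ transitive automorphism action on the geometric fibre $\Rightarrow$ irreducible'' fails over a merely integral base, and your argument supplies no additional hypothesis (normality, geometric unibranchness) that would repair it.

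Your proposed fallback does not close this either. Normality of $Y$ would indeed give irreducibility, but, as you yourself note, $X$ is not assumed normal; and replacing $X$ by $Spec(\mathcal{O}_{k(X)})$ via \emph{Remark 1.7} changes the base scheme, so at best it says something about covers of the new scheme, not that covers of $X$ itself are irreducible, which is what the asserted equality $\left[X;\Omega\right]_{et}^{\ast}=\left[X;\Omega\right]_{et}$ requires; moreover \emph{Remark 1.7} sits downstream of the main theorem, whose proof uses this very lemma, so invoking it here would be circular. For the record, the paper's own proof takes a different (and very terse) route: it claims that a section of $Y$ over $X$ gives a homeomorphism $sp(X)\cong sp(Y)$ because both spaces are connected, and concludes irreducibility from that of $X$; so your approach neither matches the paper's nor, as written, closes the essential step.
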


\begin{proof}
Let $\left( Y,y\right) \in \left[ X;\Omega \right] _{et}^{\ast }.$ By taking a
section of $Y$ over $X,$ it is seen that the underling spaces $$sp\left(
X\right)\cong  sp\left( Y\right) $$ are homeomorphic since $X$ and $Y$ are
connected. As $X$ is irreducible, $Y$ must be irreducible.
\end{proof}

\subsection{Proof of the main theorem}

Now we can prove \emph{Theorem 1.1}.

\begin{proof}
(\textbf{Proof of \emph{Theorem 1.1}}) Let $\Omega$ be  an algebraic closure
of the function field $k\left( X\right)$. By \emph{Lemma 2.3}, assume that
$X $ is essentially affine in $\Omega $ without loss of generality. For brevity, write
$$k(X)^{un}=k(X;\Omega)^{un}.$$

Let $\Delta \subseteq k(X)^{un}\setminus k\left( X\right) $ be a set of
generators of the field $k(X)^{un}$ over $k(X) $. Put
$$
I=\{\text{finite subsets of }\Delta \};$$ $$ G=Gal\left( k(X)^{un}/k\left(
X\right) \right) .
$$

We will proceed in several steps in the following.

\textbf{Step 1.} As in \S 4.1, for $\Delta $ and $G$ we have an
integral variety $$X_{\infty }^{un}=X[\Delta]$$ and a surjective morphism $$
f_{\infty }:X_{\infty }^{un}\rightarrow X$$ such that
\begin{itemize}
\item $k\left( X_{\infty }^{un}\right) =k\left( X\right) ^{un}$;

\item $X_{\infty }^{un}$ is Galois over $X$ by $f_{\infty }$;

\item $X_{\infty }^{un}$ is essentially affine in $k\left( X;s\right)
^{un}\subseteq\Omega$.
\end{itemize}

\textbf{Step 2.} Let ${\alpha }\in I$. Likewise, for $\alpha $ and
$G_{\alpha}=Gal(k(X)[\alpha]/k(X))$ we have an integral variety $$X_{\alpha
}^{un}=X[\alpha]$$ and a surjective morphism $$f_{\alpha }:X_{\alpha
}^{un}\rightarrow X$$ satisfying the properties:
\begin{itemize}
\item $k\left( X_{\alpha }^{un}\right)\subseteq k\left( X\right) ^{un}$;

\item $X_{\alpha }^{un}$ is Galois over $X$ by $f_{\alpha}$;

\item $X_{\alpha}^{un}$ is essentially affine in $k\left( X_{\alpha
}^{un}\right)$.
\end{itemize}

\textbf{Step 3.} Fixed any $\alpha \subseteq \beta$ in $I.$
 We have integral varieties $X_{{\alpha }
}^{un}$ and $X_{{\beta }}^{un}$ and a surjective morphism
\begin{equation*}
f_{\alpha }^{\beta }:X_{\beta }^{un}\rightarrow X_{\alpha }^{un}
\end{equation*}
of finite type, where $f_{\alpha }^{\beta }$ is obtained as in \S 4.2 in a natural manner and satisfies the property
\begin{equation*}
f_{\beta }=f_{\alpha }\circ f_{\alpha }^{\beta }.
\end{equation*}

Then we have Galois covers
$$X_{{\alpha }}^{un}/X;$$ $$X_{{\beta }}^{un}/X;$$ $$X_{{\beta }}^{un}/X_{{\alpha }}^{un}.$$

As $\Delta$ is an infinite set, there is a $\gamma $ in $I$ such that $$\gamma \supseteq \alpha ,\gamma
\supseteq \beta .$$ In particular, $$X_{{\gamma }}^{un}/X_{{\beta }}^{un}$$ and
$$X_{{\gamma }}^{un}/X_{{\alpha }}^{un}$$ are both Galois covers.

\textbf{Step 4.} For any $\alpha ,\beta \in I$, we say $\alpha \leq \beta $
if and only if $\alpha \subseteq \beta .$ Then with $\leq$, $I$ is a partially ordered
set.

We have a direct system $$\{k\left( X_{{\alpha }}^{un}\right) ;i_{\alpha }^{\beta }\}_{\alpha
\in I}$$ of rings, where each
\begin{equation*}
i_{\alpha }^{\beta }:k\left( X_{{\alpha }}^{un}\right) \rightarrow k\left(
X_{{\beta }}^{un}\right)
\end{equation*}
is a homomorphism of fields induced by the morphism $f_{\alpha }^{\beta }$.

For the fields, we have
\begin{equation*}
k\left( X_{\infty }^{un}\right) ={\lim_{\longrightarrow }}_{\alpha \in
I}k\left( X_{{\alpha }}^{un}\right).
\end{equation*}
For the Galois groups, we have
\begin{equation*}
Gal\left( k\left( X_{\infty }^{un}\right) /k\left( X\right) \right) \cong {\
\lim_{\longleftarrow }}_{\alpha \in I}Gal\left( k\left( X_{\alpha
}^{un}\right) /k\left( X\right) \right) .
\end{equation*}

\textbf{Step 5.} Put
\begin{equation*}
\left[X; \Omega \right]_{qc} =\{X_{\alpha }^{un}:\alpha \in I\}.
\end{equation*}

For any $X_{\alpha }^{un},X_{\beta }^{un}\in \left[X; \Omega \right]_{qc} ,$ we say $$
X_{\alpha }^{un}\leq X_{\beta }^{un}$$ if and only if $X_{{\beta }}^{un}$ is Galois
over $X_{{\alpha }}^{un}$.

Then $\left[X; \Omega \right]_{qc} $ is a
directed set, where two elements $Y,Z\in \left[X; \Omega \right]_{qc} $ are
identified with each other if they are isomorphic over $X$ as schemes.

Consider
$
\left[X; \Omega \right]_{et}
$,
the set of finite \'{e}tale Galois covers of X with geometric points
over $s_{\xi}$, where two elements $Y,Z\in \left[X; \Omega \right]_{et} $
are identified with each other if they are isomorphic over $X$ as schemes.

Without loss of generality, in the remainder of the paper,
every $X_{\alpha}\in \left[X; \Omega \right]_{et}$ is
assumed to be essentially affine in the function field $k(X_{\alpha})$ from \emph{Lemma 2.3}.

For any $X_{\alpha},X_{\beta}\in \left[X; \Omega \right]_{et}$, we say $$
X_{\alpha}\leq X_{\beta}$$ if and only if $X_{\beta}$ is a finite \'{e}tale
Galois cover over $X_{\alpha}.$
It follows that $\left[X; \Omega \right]_{et} $ is a
directed set.

It is easily seen that $\left[X; \Omega \right]_{et} $ is a directed subset of $\left[X;
\Omega \right]_{qc} $.

\textbf{Step 6.} Prove $\left[X; \Omega \right]_{et} $ is cofinal in $\left[X; \Omega \right]
_{qc}$.

In deed, according to \emph{Step 5}, we have two inverse systems of
groups
\begin{equation*}
\{Aut(Y/X):Y\in \left[X; \Omega \right]_{et}\};
\end{equation*}
\begin{equation*}
\{Aut(Z/X):Z\in \left[X; \Omega \right]_{qc}\}.
\end{equation*}

Denote by $$\eta:\left[X; \Omega \right]_{et} \hookrightarrow \left[X; \Omega %
\right]_{qc}$$ the inclusion map of the sets. Then $\eta$ induces a map
\begin{equation*}
\eta_{\ast}:\{Aut(Y/X):Y\in \left[X; \Omega \right]_{et}\}\to
\{Aut(Z/X):Z\in \left[X; \Omega \right]_{qc}\}
\end{equation*}
of inverse systems of groups, given by
\begin{equation*}
Aut(Y/X)\mapsto Aut(Y/X).
\end{equation*}
It is clear that $\eta_{\ast}$ is injective.

On the other hand, take any $Z\in \left[X; \Omega \right]_{qc}$. It is seen that $k(Z)$
is a finite Galois extension of $k(X)$. As a subfield of $k(X)^{un}$, $k(Z)$
must be or contained in a finite \'{e}tale Galois extension $k(Z^{\prime})$
of $k(X)$ with $Z^{\prime}\in \left[X; \Omega \right]_{et}$ by \emph{Lemma 5.9} and according to the operations
on arithmetically unramified extensions in \S 5.2.

For $Z^{\prime}$, we have two cases:

\textbf{Case (i)}. Let $k(Z)=k(Z^{\prime})$. Then we have
\begin{equation*}
Aut(Z^{\prime}/X)=Aut(Z/X).
\end{equation*}
In such a case, we put
\begin{equation*}
Z^{\ast}=Z^{\prime}.
\end{equation*}

\textbf{Case (ii)}. Let $k(Z)\subsetneqq k(Z^{\prime})$. Then $k(Z^{\prime})$
is a Galois extension of $k(Z)$ and there is a $Z^{\ast}\in \left[X; \Omega %
\right]_{qc}$ such that
\begin{equation*}
\begin{array}{l}
Z^{\ast}\geq Z; \\
k(Z^{\ast})=k(Z^{\prime}); \\
Aut(Z^{\ast}/X)\cong Gal(k(Z^{\prime})/k(X))\cong Aut(Z^{\prime}/X).
\end{array}%
\end{equation*}

Now define
\begin{equation*}
\left[X; \Omega \right]_{qc}^{\ast}=\{Z^{\ast}:Z\in \left[X; \Omega \right]
_{qc}\}
\end{equation*}
where each $Z^{\ast}$ is given in the manner of the two cases above.

As a directed subset, we have
\begin{equation*}
\left[X; \Omega \right]_{qc}^{\ast}\subseteq \left[X; \Omega \right]_{qc}.
\end{equation*}

It is seen that $\left[X; \Omega \right]_{et} $ is cofinal in $\left[X;
\Omega \right]_{qc}^{\ast}$ and $\left[X; \Omega \right]_{qc}^{\ast}$ is
cofinal in $\left[X; \Omega \right]_{qc}$.
Hence, $\left[X; \Omega \right]_{et} $ is cofinal in $\left[X; \Omega \right]
_{qc}$.

\textbf{Step 7.} As the inverse limits are isomorphic for inverse systems of groups indexed by cofinal directed
sets, we have
\begin{equation*}
\begin{array}{l}
\pi _{1}^{et}\left( X,s\right) \\
={\lim_{\longleftarrow}} _{{Z\in \left[X; \Omega \right]_{et} }}Aut\left(
Z/X\right) \\
\cong \lim_{\longleftarrow _{Z\in \left[X; \Omega \right]_{qc} }}Aut\left(
Z/X\right) \\
\cong \lim_{\longleftarrow _{Z\in \left[X; \Omega \right]_{qc} }}Gal\left(
k\left( Z\right) /k\left( X\right) \right) \\
=\lim_{\longleftarrow _{\alpha \in I}}Gal\left( k\left( X_{\alpha
}^{un}\right) /k\left( X\right) \right) \\
\cong Gal\left( k\left( X_{\infty }^{un}\right) /k\left( X\right) \right) \\
=Gal\left( k\left( X;\Omega\right) ^{un}/k\left( X\right) \right).%
\end{array}%
\end{equation*}
This completes the proof.
\end{proof}

\subsection*{Acknowledgment}

\quad The author would like to express his sincere gratitude to Professor Li
Banghe for his advice and instructions on algebraic geometry and topology.

The author would also like to express his sincere gratitude to Professor
Laurent Moret-Bailly for the stimulating discussion on an earlier version of arithmetically unramified.

The present paper is a revised version of the preprint \cite{an2}.

\end{document}